\newtheorem{theorem}{Theorem}
\newtheorem{lemma}[theorem]{Lemma}
\newtheorem{remark}{Remark}
\newtheorem{cor}{Corollary}
\newcommand{\ZZ}{{\mathbb Z}}
\newcommand{\FF}{{\mathbb F}}
\newcommand{\QQ}{{\mathbb Q}}
\newcommand{\fq}{{\FF_q}}
\newcommand{\ftwo}{{\FF_2}}
\newcommand{\fthree}{{\FF_3}}
\newcommand{\Disc}{\operatorname{Disc}}
\newcommand{\Res}{\operatorname{Res}}
\newtheorem{conj}[theorem]{Conjecture}
\newcommand{\fp}{\FF_p}
\begin{document}
	
	\title{A note on the stability of trinomials over finite fields}

	\author[O. Ahmadi]{Omran Ahmadi}
	\address{Institute for Research in Fundamental Sciences, Iran}
	\email{oahmadid@ipm.ir}
	
	\author[K. Monsef-Shokri]{Khosro Monsef-Shokri}
	\address{School of Mathematical Science, Shahid Beheshti University,
		Iran}
	\email{k\_shokri@sbu.ac.ir}
	
	\date{August 16, 2018.}

\begin{abstract}
A polynomial $f(x)$ over a field $K$ is called stable if all of its iterates are irreducible over $K$.	In this paper we study the stability of trinomials over finite fields. Specially, we show that if $f(x)$ is a trinomial of even degree over the binary field $\ftwo$, then $f(x)$ is not stable. We prove a similar result for some families of monic trinomials over finite fields of odd characteristic. These results are obtained towards the resolution of a conjecture on the instability of polynomials over finite fields whose degrees are divisible by the characteristic of the underlying field. 
\end{abstract}

\maketitle
\let\thefootnote\relax\footnotetext{ Mathematical Classification Subject: 11T06,12E20\\
	Keywords: Polynomials, Iterations, Stability, Finite fields\nonumber}
Let $K$ be a field, and let $K[x]$ denote the polynomial ring over $K$. For a polynomial $f(x)\in K[x]$, its $n$-th iterate for $n\ge 0$ is defined inductively  by the following equations
\[
f^{(0)}(x)=x, f^{(n)}(x)=f(f^{(n-1)}(x).
\]
A polynomial $f(x)\in K[x]$ is called stable if $f^{(n)}(x)$ is irreducible over $K$ for every $n\ge 1$. Studying the stability of polynomials had attracted the attention of many researchers (see for example~\cite{AOLS,Nidal,Ayad,DNOS,Goksel,Jones-Boston,Odoni}). In this paper we are interested in the stability of polynomials over finite fields. Let $\fp$ denotes the finite field with $p$ elements where $p$ is a prime number. This paper has been inspired by a question raised by Domingo Gomez-Perez~\cite{Domingo} who based on some computations asked whether it is true that if the degree of the polynomial $f(x)\in\fp[x]$ is divisible by the prime number $p$, then $p+1$-th iterate of $f(x)$, i.e., $f^{(p+1)}(x)$, is not an irreducible polynomial over $\fp$. Our computations show that the answer to his question is negative since if $f(x)=x^{10} + x^9 + x^6 + x^5 + x^4 + x^3 + 1$, then $f(x),ff(x)$ and $fff(x)$ are irreducible and $f^{(4)}(x)$ is not irreducible over $\ftwo$. Though the answer to Gomez-Perez's question turned out to be negative, based on our computations we make the following conjecture.

\begin{conj}\label{main-conj}
  If the degree of the polynomial $f(x)\in\fp[x]$ is divisible by the prime number $p$,	then $f(x)$ is not stable over $\fp$.
\end{conj}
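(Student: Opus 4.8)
The plan is to argue by contradiction. Suppose $f\in\fp[x]$ has degree $d$ with $p\mid d$ and is stable, so that every iterate $f^{(n)}$ is irreducible over $\fp$; I want to extract from the $f^{(n)}$ an arithmetic invariant that cannot stay in the ``irreducible'' range forever. Two features of the hypothesis $p\mid d$ are used at once. First, if every exponent occurring in $f$ with nonzero coefficient is divisible by $p$, then $f=h(x)^{p}$ for some $h\in\fp[x]$ (using $c=c^{p}$ for $c\in\fp$), so $f$ is already reducible; hence we may assume $f'\neq 0$, and then the vanishing of the leading term $dx^{d-1}$ of $f'$ forces $\deg f'\le d-2$. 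Second, consecutive iterates are linked by the standard step-up lemma: if $f^{(n-1)}$ is irreducible with a root $\theta$, then $f^{(n)}=f^{(n-1)}\circ f$ is irreducible over $\fp$ if and only if $f(x)-\theta$ is irreducible over $\fp(\theta)=\FF_{p^{d^{n-1}}}$. So stability says precisely that the degree-$d$ polynomials $f(x)-\theta$ stay irreducible all the way up an increasing tower of finite fields. (Stability, $d$, and $p\mid d$ are preserved by the scaling conjugation $f\mapsto\alpha^{-1}f(\alpha x)$, and trinomials over $\ftwo$ are automatically monic, so for the structured cases one may take $f$ monic.)

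The engine for the even-degree part of the problem is the discriminant. A short computation gives $\Disc\bigl(f(x)-\theta\bigr)=\kappa\,C(\theta)$, where $\kappa\in\fp^{*}$ is a fixed constant and $C(y)=\prod_{f'(\beta)=0}\bigl(f(\beta)-y\bigr)$ is the critical-value polynomial, of degree $\deg f'\le d-2$; combined with the clean composition identity $\Disc(g\circ f)=\Disc(g)^{d}\prod_{g(\theta)=0}\Disc\bigl(f(x)-\theta\bigr)$, valid for monic $g,f$, this gives, for $n\ge 2$, a congruence of the form $\Disc\bigl(f^{(n)}\bigr)\equiv\prod_{j}f^{(n-1)}(\mu_j)$ modulo squares in $\fp^{*}$, where $\mu_1,\dots,\mu_{\deg f'}$ (with multiplicity) are the critical values of $f$. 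If $d$ is even then $\deg f^{(n)}$ is even, so by the classical discriminant parity criterion (Stickelberger, Swan) irreducibility of $f^{(n)}$ forces $\Disc\bigl(f^{(n)}\bigr)$ to be a nonsquare in $\fp$; the goal is then to find some $n$ for which $\prod_j f^{(n-1)}(\mu_j)$ is a square, a contradiction. For monic trinomials $f(x)=x^{d}+ax^{k}+b$ with $p\nmid k$ this collapses, because $f'$ is the monomial $kax^{k-1}$: here $C(y)=(b-y)^{k-1}$, so the product is simply $f^{(n-1)}(b)^{k-1}$, which is a square as soon as $k$ is odd --- already making $f$ reducible --- while the remaining sub-cases reduce to conditions on the forward orbit of $b$ under $f$; and over $\ftwo$, where every element is a square, one replaces the square-class obstruction by Swan's $2$-adic computation of the discriminant of a trinomial. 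I expect this to be the mechanism behind the theorems announced in the abstract.

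The real difficulty is the case this does not reach. If $d$ is odd --- which is permitted, since $p\mid d$ allows $d=p$ with $p$ odd --- then $\deg f^{(n)}$ is odd and the parity obstruction is vacuous, and one must instead exploit the wild ramification of $f$ over the point at infinity, where the ramification index is the multiple $d$ of $p$. When $d=p$ the polynomial $f(x)-\theta$ has prime degree, hence is irreducible over $\FF_{p^{d^{n-1}}}$ exactly when $\theta$ fails to lie in the image $f\bigl(\FF_{p^{d^{n-1}}}\bigr)$, and stability becomes the assertion that the Frobenius orbit of roots of each $f^{(n-1)}$ forever avoids this Frobenius-stable image set. Since the image set is Galois-stable while the root set is a single Galois orbit, membership is all-or-nothing, and cannot be forced by equidistribution or Weil-bound counting; one genuinely has to show that the wild structure at infinity eventually forces a splitting somewhere in the preimage tree of $0$. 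Equivalently, one must control the critical orbit $\{f^{(n)}(\beta):f'(\beta)=0\}$ --- in effect the arboreal Galois representation attached to $f$ --- for all $n$ simultaneously. That, I believe, is the main obstacle: it is manageable precisely when the critical orbit is tiny (trinomials, and more generally polynomials with very few critical points), which is why those families can be settled while the conjecture in full remains out of reach.
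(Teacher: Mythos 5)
The statement you are trying to prove is stated in the paper as a \emph{conjecture}: the paper does not prove it, and only establishes special cases (all trinomials of even degree over $\ftwo$, and a few families in odd characteristic). Your proposal likewise does not prove it --- your own closing sentences concede that the general case ``remains out of reach'' --- so as a proof of the statement it has a fundamental gap: there is no argument that produces, for an arbitrary $f$ of degree divisible by $p$, an iterate that factors. What you have written is a plausible research plan, and its even-degree half does track the paper's actual mechanism: the identity $\Disc(g\circ f)=\Res(gf,f')\cdot\Res(gf,g'(f))$ with $\Res(gf,g'(f))=\Disc(g)^{\deg f}$ (the paper's Lemma~\ref{Res-Disc}), the observation that for a trinomial the inner resultant collapses to a power of $g(f(0))$ because $f'$ is essentially a monomial, and then Stickelberger over odd characteristic or Swan's mod-$8$ computation over $\ftwo$ (the paper's Lemma~\ref{comp-trinomial} carries out the Newton-identity computation you only gesture at, and it is there that the real work lies; note also that the paper only needs this to kill $f^{(2)}$ or $f^{(3)}$, not an unspecified large iterate).

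Beyond the global gap, two concrete steps in your sketch are wrong. First, your claim that for odd degree ``the parity obstruction is vacuous'' is false and is contradicted by the paper itself: for a squarefree $f^{(n)}$ of odd degree $s$, irreducibility forces $r=1\equiv s\pmod 2$, hence $\Disc(f^{(n)})$ must be a \emph{square}; exhibiting a nonsquare discriminant therefore proves reducibility, and this is exactly how the paper handles $f(x)=x^{p}+ax^{2}+b$ with $ab=-3$ (computing $\Disc(ff)=-36$ and $\Disc(fff)=-18$ and checking they are nonresidues for the relevant residue classes of $p$). Second, your assertion that a polynomial of prime degree $p$ over a finite field is ``irreducible exactly when $\theta$ fails to lie in the image'' is false in general: a degree-$p$ polynomial with no root may still split into irreducible factors of intermediate degrees (e.g.\ degrees $2$ and $3$ for $p=5$). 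That dichotomy holds for Artin--Schreier-type polynomials $x^{p}-x-a$ (which is how Odoni's case works), but not for the general trinomials the conjecture covers, so the reduction of stability to an image-avoidance statement does not stand as written.
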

 
The conjecture is trivial when $f(x)$ is a binomial since if $f(x)=x^{pl}+a$, then $f(x)=(x^l+a)^p$. 
In his pioneering work studying the stability of polynomials ~\cite{Odoni}, Odoni showed that the additive polynomial $f(x)=x^p-x-1$ is not stable over the finite field $\fp$.  In~\cite{AOLS}, it was shown that there is no stable quadratic polynomial over the fields of characteristic two. In~\cite{DNOS}, it was shown that certain cubic polynomials which are trinomial, i.e, polynomials with three nonzero terms are not stable. All these cases can be considered as special cases for which Conjecture~\ref{main-conj} is true. Considering these cases it seems that the next natural step towards the proof the conjecture would be to confirm it for trinomials. In this paper, we study stability of monic trinomials over finite fields and confirm Conjecture~\ref{main-conj} for all trinomials over the finite field $\ftwo$ and some families of trinomials over finite fields of odd characteristic. We as well present some results dealing with polynomials of higher weight.

This paper is organized as follows. In Section~\ref{Prem}, we gather some preliminary results. In Section 2, we prove our main results about trinomials over finite fields. In Section 3, we present some results on the stability of polynomials of higher weight. Finally, Section 4 contains our concluding remarks.

\section{Preliminaries}\label{Prem}
In this section we prove and gather some results which will be used in the rest
of the paper to prove the main results of the paper.

\subsection{Newton Identities} Let $L$ be a field, and let $x_1,x_2,\ldots,x_m\in L$. 
 We denote by $e_k(x_1,x_2,\ldots,x_m)$ and $p_k(x_1,x_2,\ldots,x_m)$ the $k$-th elementary symmetric polynomial and the $k$-th power sum in $x_1,x_2,\ldots,x_m$,  respectively, i.e, 
\[
e_k(x_1,x_2,\ldots,x_m)=\sum_{1\le i_1<i_2<\ldots<i_k\le m}x_{i_1}x_{i_2}\ldots x_{i_k},
\] 
\[
p_k(x_1,x_2,\ldots,x_m)=\sum_{i=1}^{m}x_i^k.
\]
If $L$ is of characteristic zero and we let $p_0(x_1,x_2,\ldots,x_m)=m$, then for $k\ge 1$ and $l=\min(k,m)$ we have
\[
p_k-p_{k-1}e_1+p_{k-2}e_2+\cdots+(-1)^{l-1}p_{k-l+1}e_{l-1}+(-1)^l\frac{l}{m}p_{k-l}e_l=0.
\]

\subsection{Polynomial transformation of irreducible polynomials}\

The following lemma is known as Capelli's lemma and can be found in ~\cite{Cohen-1} 
too.
\begin{lemma}\label{cohen}
Let $f(x)$ be a degree $n$ irreducible polynomial over $\mathbb{F}_q$, and
let $g(x), h(x)\in \mathbb{F}_q[x]$. Then $p(x)=h(x)^nf(g(x)/h(x))$ is irreducible over 
$\mathbb{F}_q$ if and only if for some root $\alpha$ of $f(x)$ in 
$\mathbb{F}_{q^n}$, $g(x)-\alpha h(x)$ is an irreducible polynomial over 
$\mathbb{F}_{q^n}$.
\end{lemma}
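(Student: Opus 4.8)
The plan is to pass to the splitting field of $f$ and then essentially just count degrees of field extensions. Since $f$ is irreducible of degree $n$ over $\fq$ it is separable, so it has $n$ distinct roots $\alpha_1,\dots,\alpha_n$ in $\fqn$; writing $f(x)=a\prod_{i=1}^{n}(x-\alpha_i)$ and substituting $g/h$ gives the factorization
\[
p(x)=h(x)^{n}f\!\left(\frac{g(x)}{h(x)}\right)=a\prod_{i=1}^{n}\bigl(g(x)-\alpha_i h(x)\bigr).
\]
I would work under the coprimality assumption $\gcd(g,h)=1$ (harmless in the intended applications, where $h=1$); then $\gcd(g-\alpha_i h,\,h)=\gcd(g,h)=1$ and $\gcd(g-\alpha_i h,\,g-\alpha_j h)=1$ for $i\ne j$, so the $n$ factors above are pairwise coprime over $\fqn$, and each has degree $d:=\max(\deg g,\deg h)$, since the leading terms can cancel only if some $\alpha_i\in\fq$ — impossible as $n\ge 2$ (when $n=1$, $f$ is linear and $p$, $g-\alpha_1 h$ coincide up to a scalar). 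Hence $\deg p=nd$.

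For the forward implication (from $g-\alpha h$ irreducible over $\fqn$ to $p$ irreducible over $\fq$): the Frobenius $\phi\colon t\mapsto t^{q}$ generates $\mathrm{Gal}(\fqn/\fq)$ and permutes $\{\alpha_1,\dots,\alpha_n\}$ transitively; acting on coefficients it is a ring automorphism of $\fqn[x]$ carrying $g-\alpha_i h$ to $g-\phi(\alpha_i)h$, so once one factor $g-\alpha h$ is irreducible over $\fqn$, all of them are. Then the displayed equation is the factorization of $p$ into pairwise coprime $\fqn$-irreducibles, cyclically permuted by $\phi$, and any $\fq$-irreducible factor of $p$ must be $\phi$-stable, hence a product over a $\phi$-stable set of indices, hence (by transitivity) all of $p$. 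So $p$ is irreducible over $\fq$.

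For the converse: let $\beta$ be a root of the irreducible polynomial $p$, so $[\fq(\beta):\fq]=\deg p=nd$. By the displayed factorization $\beta$ is a root of some $g-\alpha_i h$; coprimality of $g-\alpha_i h$ with $h$ forces $h(\beta)\ne 0$, so $\alpha_i=g(\beta)/h(\beta)\in\fq(\beta)$, i.e.\ $\fqn=\fq(\alpha_i)\subseteq\fq(\beta)$. The tower law then gives $[\fq(\beta):\fqn]=d$, while $\beta$ is a root of the degree-$d$ polynomial $g-\alpha_i h\in\fqn[x]$; hence the minimal polynomial of $\beta$ over $\fqn$ has degree $d$ and must equal $g-\alpha_i h$ up to a scalar, so $g-\alpha_i h$ is irreducible over $\fqn$.

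The only real subtlety — everything else is bookkeeping — is the control of degrees: one must know that each $g-\alpha_i h$ genuinely has degree $d$ and that the $n$ factors are pairwise coprime, equivalently that $p$ has the expected degree $nd$ and is separable. This is exactly where $\gcd(g,h)=1$ and $n\ge 2$ enter; without a coprimality hypothesis the biconditional can fail (take $g=h$), so I would either impose it or read it as implicit in the statement. Beyond that, Galois descent is used only to promote irreducibility of a single factor to all of them in the forward direction.
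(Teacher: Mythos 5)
The paper does not prove this lemma at all: it is quoted as Capelli's lemma with a citation to Cohen \cite{Cohen-1}, so there is no in-paper argument to compare against. Your proof is a correct, self-contained version of the standard argument: factor $p(x)=a\prod_i\bigl(g(x)-\alpha_i h(x)\bigr)$ over $\fqn$, use the Frobenius orbit on the conjugates $\alpha_i$ to show that any $\fq$-irreducible factor must absorb the whole $\phi$-stable product (forward direction), and use the tower $\fq\subseteq\fqn\subseteq\fq(\beta)$ together with the degree count $\deg p=nd$ to force $g-\alpha h$ to be the minimal polynomial of $\beta$ over $\fqn$ (converse). Your observation that the biconditional is literally false without a hypothesis like $\gcd(g,h)=1$ (e.g.\ $g=h$ linear, $n\ge 2$: the right-hand side holds while $p$ is a constant times $h^n$) is a genuine and worthwhile point; the standard formulations of Capelli's lemma carry such a hypothesis implicitly, and it is harmless here since the paper only ever applies the lemma with $h=1$. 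The separate treatment of $n=1$ and the verification that each factor $g-\alpha_i h$ has exact degree $\max(\deg g,\deg h)$ are the right bookkeeping steps, and I see no gap in the argument.
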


\subsection{Parity of the number of the irreducible factors of a polynomial over
finite fields}{\bf{Be careful with the characteristic}}
We begin this section by recalling the {\it Discriminant} and the
{\it Resultant} of polynomials over a field. For a more detailed treatment
 see \cite[Ch. 1, pp. 35-37]{LN}.

Let $K$ be a field, and let $F(x) \in K[x]$ be a polynomial of
degree $s\geq 2$ with leading coefficient $a$. Then the {\it
Discriminant}, $\Disc(F)$, of $F(x)$ is defined by
\[
\Disc(F) = a^{2s-2} \prod_{i<j} (x_i - x_j)^2 \; ,
\]
where $x_0,x_1,\ldots,x_{s-1}$ are the roots of $F(x)$ in some
extension of $K$. Although $\Disc(F)$ is defined in terms of the
elements of an extension of $K$, it is actually an element of $K$
itself. There is an alternative formulation of $\Disc(F)$, given
below, which is very helpful for the computation of the
discriminant of a polynomial.

Let $G(x) \in K[x]$ and suppose $F(x)= a \prod_{i=0}^{s-1}(x-x_i)$
and $G(x) = b \prod_{j=0}^{t-1}(x-y_j)$, where $a,b \in K$ and
$x_0,x_1,\ldots,x_{s-1}$, $y_0,y_1,\ldots,y_{t-1}$ are in some
extension of $K$. Then the {\it Resultant}, $\Res(F,G)$, of $F(x)$
and $G(x)$ is

\begin{equation}\label{roots}
\Res(F,G) = (-1)^{st} b^s \prod_{j=0}^{t-1} F(y_j)
           = a^t \prod_{i=0}^{s-1} G(x_i) \; .
\end{equation}

The following statements are immediate from the definition of the
resultant of two polynomials.

\begin{cor}\label{cor1}
If $F$ is as above, and $G_1,G_2,R\in K[x]$, then
\begin{itemize}
\item[(i)] $\Res(F,-x)=F(0)$.

\item[(ii)] $\Res(F,G_1G_2)=\Res(F,G_1)\Res(F,G_2)$.

\end{itemize}
\end{cor}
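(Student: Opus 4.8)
The plan is to read off both parts directly from the two expressions for the resultant recorded in \eqref{roots}, being careful to track degrees and leading coefficients.

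For part (i), I would take $G(x)=-x$, which has degree $t=1$, leading coefficient $b=-1$, and the single root $y_0=0$. Substituting into the first form $\Res(F,G)=(-1)^{st}b^s\prod_{j=0}^{t-1}F(y_j)$ of \eqref{roots} gives $\Res(F,-x)=(-1)^{s}(-1)^{s}F(0)=F(0)$, since $(-1)^{2s}=1$. Equivalently, using the second form one gets $\Res(F,-x)=a\prod_{i=0}^{s-1}(-x_i)=a(-1)^s\prod_i x_i$, which is exactly $F(0)=a\prod_{i=0}^{s-1}(0-x_i)$; either route gives the claim, and it remains valid in the degenerate case $F(0)=0$.

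For part (ii), write $F(x)=a\prod_{i=0}^{s-1}(x-x_i)$, $G_1(x)=b_1\prod_{j}(x-y_j)$ of degree $t_1$, and $G_2(x)=b_2\prod_{k}(x-z_k)$ of degree $t_2$, all over a suitable extension of $K$. Then $G_1G_2$ has degree $t_1+t_2$ and leading coefficient $b_1b_2$, and its multiset of roots is the union of those of $G_1$ and $G_2$. Applying the second form $\Res(F,G)=a^t\prod_{i=0}^{s-1}G(x_i)$ of \eqref{roots} to $G=G_1G_2$ and factoring the product pointwise over the roots of $F$ yields
\[
\Res(F,G_1G_2)=a^{t_1+t_2}\prod_{i=0}^{s-1}G_1(x_i)G_2(x_i)=\Bigl(a^{t_1}\prod_{i=0}^{s-1}G_1(x_i)\Bigr)\Bigl(a^{t_2}\prod_{i=0}^{s-1}G_2(x_i)\Bigr)=\Res(F,G_1)\,\Res(F,G_2).
\]

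Both steps are routine once this bookkeeping is set up; the only mild points to verify are that $\deg(G_1G_2)=\deg G_1+\deg G_2$ and that the leading coefficient of a product is the product of the leading coefficients, since these are precisely what make the powers of $a$ and the constants $b_i$ recombine correctly (the argument also covers the case where one of the $G_i$ is a nonzero constant, whose root multiset is empty). I do not anticipate any genuine obstacle here — this is exactly why the statement is labelled as immediate from the definition of the resultant.
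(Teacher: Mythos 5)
Your proposal is correct and is exactly the computation the paper has in mind: the paper offers no written proof, declaring both parts immediate from the two expressions for the resultant in~\eqref{roots}, and your verification via those formulas (tracking degrees, leading coefficients, and root multisets) is the intended routine argument. No issues.
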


\begin{cor} If $F$ is as above, and $F'\in K[x]$ is the
derivative of $F$, then

\begin{equation}\label{ff'}
{\Disc}(F)=(-1)^{\frac{s(s-1)}{2}}a^{s-l-2} \Res(F,F'),
\end{equation}
where $l$ is the degree of $F'$. Notice that if $K$ is of positive characteristic, then we may have $l<n-1$. 
\end{cor}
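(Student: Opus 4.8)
The plan is to prove \eqref{ff'} by a direct computation that combines the root-form definition of $\Disc(F)$ with the product formula \eqref{roots} for the resultant. Write $F(x)=a\prod_{i=0}^{s-1}(x-x_i)$ over some extension of $K$. First I would differentiate, obtaining $F'(x)=a\sum_{k=0}^{s-1}\prod_{i\ne k}(x-x_i)$, and then evaluate at a root: every summand except the one with $k=i$ contains the factor $(x_i-x_i)$, so $F'(x_i)=a\prod_{j\ne i}(x_i-x_j)$ for each $i$.

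Next I would apply the second equality in \eqref{roots} with $G=F'$. The crucial point — and the reason the exponent in \eqref{ff'} is $s-l-2$ rather than the classical $-1$ — is that one must take $t=\deg F'=l$, which in positive characteristic may be strictly less than $s-1$. This gives $\Res(F,F')=a^{l}\prod_{i=0}^{s-1}F'(x_i)$, and substituting the formula for $F'(x_i)$ from the previous step yields
\[
\Res(F,F')=a^{l+s}\prod_{i\ne j}(x_i-x_j),
\]
the product now being over all ordered pairs $(i,j)$ with $i\ne j$.

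Then I would regroup this ordered product into unordered pairs, using $(x_i-x_j)(x_j-x_i)=-(x_i-x_j)^2$ for each of the $s(s-1)/2$ pairs with $i<j$:
\[
\prod_{i\ne j}(x_i-x_j)=(-1)^{\frac{s(s-1)}{2}}\prod_{i<j}(x_i-x_j)^2.
\]
Comparing with $\Disc(F)=a^{2s-2}\prod_{i<j}(x_i-x_j)^2$ gives $\Res(F,F')=(-1)^{\frac{s(s-1)}{2}}a^{l-s+2}\Disc(F)$, and solving for $\Disc(F)$ (using that $(-1)^{m}=(-1)^{-m}$) produces \eqref{ff'}.

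There is no genuine obstacle here; the argument is a routine manipulation of symmetric functions, and the only things needing attention are the exponent bookkeeping — carrying $l$ rather than $s-1$ throughout so the identity stays valid in positive characteristic — and the degenerate case in which $F$ has a repeated root, where $F'(x_i)=0$ for that root forces $\Res(F,F')=0$ while simultaneously $\Disc(F)=0$, so both sides vanish. (One tacitly assumes $F'\not\equiv 0$, as otherwise $l$ is undefined.)
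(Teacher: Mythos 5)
Your proof is correct and is exactly the standard computation the paper has in mind: the paper states this corollary without proof (declaring it immediate from the definition of the resultant), and your derivation --- evaluating $F'(x_i)=a\prod_{j\ne i}(x_i-x_j)$, applying \eqref{roots} with $t=l$, and regrouping the ordered product with the sign $(-1)^{s(s-1)/2}$ --- supplies precisely the missing details, with the exponent $a^{s-l-2}$ coming out right because you carry $l=\deg F'$ rather than $s-1$. Your remarks on the repeated-root case and on $F'\equiv 0$ are also apt.
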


\begin{lemma}\label{Res-Disc}
	Let $u(x)$ and $v(x)\in K[x]$ be of degrees $m$ and $n$ and leading coefficients $a$ and $b$, respectively. Furthermore, suppose that the derivative of $u(x)$, i.e. $u'(x)$, is of degree $l$. Then
	\[
	\Res(u(v(x)),u'(v(x))=[(-1)^{\frac{m(m-1)}{2}}a^{-m+l+2}b^{ml}]^n\Disc(u(x))^n.
	\]
\end{lemma}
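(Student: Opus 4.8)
The plan is to compute the resultant $\Res(u(v(x)), u'(v(x)))$ directly from the product formula in \eqref{roots}, exploiting that the roots of $u(v(x))$ are the preimages under $v$ of the roots of $u$. Write $u(x) = a\prod_{i=0}^{m-1}(x-\alpha_i)$ over a splitting field, so that $u(v(x)) = a\prod_{i=0}^{m-1}(v(x)-\alpha_i)$. The polynomial $u(v(x))$ has degree $mn$ with leading coefficient $a b^m$; factoring each $v(x)-\alpha_i = b\prod_{j}(x-\beta_{ij})$, the full set of roots of $u(v(x))$ is $\{\beta_{ij}\}$. Applying the first equality in \eqref{roots} with $F = u(v(x))$ and $G = u'(v(x))$ (which has degree $ml$ and leading coefficient, say, $c\,b^l$ where $c$ is the leading coefficient of $u'$), the resultant becomes a power of the leading coefficient of $u(v(x))$ times $\prod_{i,j} u'(v(\beta_{ij}))$. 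Since $v(\beta_{ij}) = \alpha_i$, this product is $\prod_{i=0}^{m-1} u'(\alpha_i)^n$.

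First I would assemble the leading-coefficient bookkeeping: the factor out front is $(-1)^{(mn)(ml)}(c b^l)^{mn}$ from the $(-1)^{st}b^s$ term, and I would track how the $a$'s and $b$'s recombine. Then $\prod_{i=0}^{m-1} u'(\alpha_i)$ is itself essentially $\Res(u, u')$ up to the leading coefficient of $u$: by the second equality in \eqref{roots}, $\Res(u,u') = a^{l}\prod_{i=0}^{m-1} u'(\alpha_i)$, so $\prod_i u'(\alpha_i) = a^{-l}\Res(u,u')$. Next I would invoke \eqref{ff'} to convert $\Res(u,u')$ into $\Disc(u)$: from \eqref{ff'}, $\Res(u,u') = (-1)^{m(m-1)/2} a^{-(m-l-2)}\Disc(u) = (-1)^{m(m-1)/2} a^{m-l-2}$ — wait, more carefully, $\Disc(u) = (-1)^{m(m-1)/2} a^{m-l-2}\Res(u,u')$, so $\Res(u,u') = (-1)^{m(m-1)/2} a^{-(m-l-2)}\Disc(u)$. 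Substituting everything back and collecting the powers of $a$, $b$, and $(-1)$ should yield the claimed formula, with the $n$-th power appearing naturally because each root $\alpha_i$ of $u$ contributes $n$ preimages under $v$.

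The main obstacle I anticipate is purely the sign and exponent bookkeeping — in particular, keeping straight the three conventions at play: the exponent $2s-2$ (really $2m-2$) hidden inside $\Disc$, the exponent $m-l-2$ in \eqref{ff'} (which can be negative in positive characteristic, exactly the warning attached to that corollary), and the exponent $-m+l+2$ appearing in the target identity, which is the negative of the one in \eqref{ff'} and is what signals that a $\Disc(u)^n$ (rather than $\Res(u,u')^n$) is the clean invariant. I would double-check the parity computation $(-1)^{(mn)(ml)} = (-1)^{m^2 nl} = (-1)^{mnl}$ (since $m^2 \equiv m \bmod 2$) and verify it merges correctly with $[(-1)^{m(m-1)/2}]^n$. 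A secondary check worth doing is to confirm the degree-$l$ hypothesis on $u'$ is used consistently: the leading coefficient of $u'(v(x))$ involves $b^l$, not $b^{m-1}$, and this is precisely where the $b^{ml}$ in the statement comes from. Once the exponents of $a$, $b$, and $-1$ are each independently verified, the identity follows.
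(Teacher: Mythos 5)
Your approach is exactly the paper's: factor $u$ over a splitting field, observe that the roots of $u(v(x))$ are the $n$ preimages $\beta_{ij}$ of each root $\alpha_i$ of $u$, use \eqref{roots} to reduce $\Res(u(v),u'(v))$ to a leading-coefficient prefactor times $\prod_i u'(\alpha_i)^n$, and then convert $a^l\prod_i u'(\alpha_i)=\Res(u,u')$ into $\Disc(u)$ via \eqref{ff'}. Two slips in your bookkeeping, though, which matter here because the exact exponents are the whole content of the lemma: the degree of $u'(v(x))$ is $nl$, not $ml$ (the $b^{ml}$ in the statement arises as $(b^m)^{nl}=(b^{ml})^n$, not from a degree $ml$); and since you take the product of $G=u'(v(x))$ over the roots of $F=u(v(x))$, you must use the second equality of \eqref{roots}, whose prefactor is $(\mathrm{lc}\,F)^{\deg G}=(ab^m)^{nl}$ with no sign and no $c$ --- not the first-form prefactor $(-1)^{st}b^s$ you quote. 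With those corrected, your computation reproduces the paper's proof line for line.
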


\begin{proof} 
	  Let $\alpha_1,\alpha_2,\ldots,\alpha_{m}$ be the roots of $u(x)=0$ in some extension of $K$.  Then the roots of $uv(x)$ are the collection of the roots of the equations $v(x)=\alpha_i$ for $i=1,2,\ldots,m$. We denote by $\beta_{ij}$, $j=1,2,\ldots,n$, the $n$ roots of $v(x)=\alpha_i$. So the roots of $uv(x)=0$ are $\beta_{ij}$ for $1\le i\le m$ and $1\le j\le n$. Since $u'(x)$ is of degree $l$, $u'(v(x))$ is of degree $nl$. Thus using~\eqref{roots} and the fact that $v(\beta_{ij})=\alpha_i$ for $1\le j\le n$ we have
	 \begin{eqnarray}
	 \Res(u(v(x)),u'(v(x))&=&(ab^{m})^{nl}\prod_{i=1}^{m}\prod_{j=1}^{n}u'(v(\beta_{ij}))=(ab^{m})^{nl}\prod_{i=1}^{m}u'(\alpha_i)^n\nonumber\\&=&(b^{m})^{nl}(a^l\prod_{i=1}^{m}u'(\alpha_i))^n=(b^{m})^{nl}[\Res(u(x),u'(x))]^n\nonumber\\&=&[(-1)^{\frac{m(m-1)}{2}}a^{-m+l+2}b^{ml}]^n\Disc(u(x))^n.
	 \end{eqnarray}
	 
\end{proof}

The following results are our main tools for
determining the parity of the number of irreducible factors of a
polynomial over a finite field.
\begin{theorem}
\cite{Pellet,Stickelberger} \label{thm-Stickel} Suppose that the
$s$-degree polynomial $f(x)\in \fq[x]$, where $q$ is an odd
prime power, is the product of $r$ pairwise distinct irreducible
polynomials over $\fq[x]$. Then $r\equiv s\pmod{2}$ if and only
if $\Disc(f)$ is a square in $\fq$.
\end{theorem}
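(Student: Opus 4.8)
The plan is to reduce the statement to a sign computation for the Frobenius permutation acting on the roots of $f$. Write $f(x)=a\prod_{i=1}^{s}(x-x_i)$, where $x_1,\dots,x_s$ lie in the splitting field $\fqm$ of $f$, and put $\delta=\prod_{1\le i<j\le s}(x_i-x_j)$, so that $\Disc(f)=a^{2s-2}\delta^2$. Because $\fq$ is perfect and $f$ is a product of \emph{pairwise distinct} irreducible polynomials, $f$ is separable and $\delta\neq 0$. The square roots of $\Disc(f)$ in $\fqc$ are exactly $\pm a^{s-1}\delta$, and since $a\in\fqstar$ this yields the first reduction: $\Disc(f)$ is a square in $\fq$ if and only if $\delta\in\fq$.

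Next I would record how the Frobenius automorphism $\phi\colon t\mapsto t^q$ of $\fqm/\fq$ acts on the set of roots. Since $f$ is separable, $\phi$ induces a permutation $\sigma$ of $\{x_1,\dots,x_s\}$; an irreducible factor of degree $d$ has its $d$ roots cycled by $\phi$ in a single $d$-cycle, so the cycle type of $\sigma$ is precisely the multiset $\{d_1,\dots,d_r\}$ of degrees of the irreducible factors of $f$, with $d_1+\cdots+d_r=s$. Consequently
\[
\operatorname{sgn}(\sigma)=\prod_{i=1}^{r}(-1)^{d_i-1}=(-1)^{\,d_1+\cdots+d_r-r}=(-1)^{\,s-r}.
\]

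The crux will be that $\phi$ scales $\delta$ by this sign. Applying $\phi$ factor by factor,
\[
\phi(\delta)=\prod_{i<j}\bigl(x_i^{\,q}-x_j^{\,q}\bigr)=\prod_{i<j}\bigl(x_{\sigma(i)}-x_{\sigma(j)}\bigr)=\operatorname{sgn}(\sigma)\,\delta=(-1)^{\,s-r}\delta,
\]
the classical identity expressing that a permutation carries the Vandermonde product to its signed version. Now $\fqm/\fq$ is Galois with cyclic group generated by $\phi$, so $\delta\in\fq$ if and only if $\phi(\delta)=\delta$. Since $q$ is odd and $\delta\neq 0$, we have $\delta\neq-\delta$, hence $\phi(\delta)=\delta$ exactly when $(-1)^{\,s-r}=1$, i.e.\ when $r\equiv s\pmod 2$. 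Chaining this with the first reduction proves the theorem.

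The only place the hypothesis ``$q$ odd'' enters is this last step: in characteristic $2$ one cannot separate $\delta$ from $-\delta$, the equivalence between $\phi(\delta)=\delta$ and the parity condition collapses, and in fact the statement itself fails there. The two structural facts doing the real work — that an irreducible polynomial over a finite field is separable, and that Frobenius permutes its roots in a single cycle — are standard, but they are exactly what forces the cycle type of $\sigma$ to match the factorization type of $f$, so I would state them explicitly rather than leave them implicit.
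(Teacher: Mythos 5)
Your proof is correct and is the classical argument for the Pellet--Stickelberger theorem: reduce squareness of $\Disc(f)$ to whether $\delta=\prod_{i<j}(x_i-x_j)$ is Frobenius-fixed, then compute the sign of the Frobenius permutation from the cycle type $\{d_1,\dots,d_r\}$. The paper does not prove this statement but merely cites Pellet and Stickelberger, and your argument is essentially the one found in those sources and in standard references, so there is nothing to flag.
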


\begin{theorem}
\cite{Dalen,Swan} \label{thm-Stickelberger} Suppose that
the $s$-degree polynomial $f(x)\in \ftwo[x]$ is the product of
$r$ pairwise distinct irreducible polynomials over $\ftwo[x]$ and,
let $F(x)\in \ZZ[x]$ be any monic lift of $f(x)$ to the
integers. Then $\Disc(F)\equiv 1$ or $5\pmod 8$, and more
importantly, $r\equiv s\pmod{2}$ if and only if $\Disc(F) \equiv 1 \pmod{8}$.
\end{theorem}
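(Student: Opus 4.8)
The plan is to reprove this by transporting the question into the $2$-adic world, where the unramified extensions of $\qtwo$ record exactly the quadratic data that the discriminant modulo $2$ cannot see. Since $f$ is a product of \emph{distinct} irreducibles over $\ftwo$ it is squarefree; write $f=\prod_{i=1}^{r}f_i$ with the $f_i$ pairwise coprime irreducibles of degree $d_i$, so $\sum_i d_i=s$. First I would use Hensel's lemma to lift this coprime factorization to $F=\prod_{i=1}^{r}F_i$ with $F_i\in\ztwo[x]$ monic and $F_i\equiv f_i\pmod 2$; each $F_i$ is irreducible over $\qtwo$ (a nontrivial monic factorization over $\ztwo$ would reduce to one of $f_i$), and since $F_i\bmod 2$ is irreducible of degree $d_i=\deg F_i$, the field $K_i=\qtwo[x]/(F_i)$ is the unramified extension of $\qtwo$ of degree $d_i$. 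In particular each $\Disc(F_i)$ and each $\Res(F_i,F_j)$ is a unit of $\ztwo$, so $\Disc(F)$ is odd --- this is where squarefreeness of $f$ is used.

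Next I would reduce to the irreducible case. Multiplicativity of the resultant (Corollary~\ref{cor1}) together with the discriminant--resultant identity \eqref{ff'} yields the standard formula $\Disc(F)=\prod_{i}\Disc(F_i)\cdot\prod_{i<j}\Res(F_i,F_j)^{2}$; as each $\Res(F_i,F_j)$ is an odd integer we get $\Res(F_i,F_j)^{2}\equiv1\pmod 8$ and hence $\Disc(F)\equiv\prod_{i=1}^{r}\Disc(F_i)\pmod 8$. So the whole statement comes down to computing $\Disc(F_i)\bmod 8$ for an unramified extension of degree $d_i$.

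This is the heart of the matter. The extension $K_i/\qtwo$ is cyclic, generated by the Frobenius $\phi$, and $\phi$ permutes the $d_i$ roots of $F_i$ as a single $d_i$-cycle; so, writing $\alpha$ for a root of $F_i$, $\phi$ multiplies $\sqrt{\Disc(F_i)}=\prod_{j<k}(\phi^{j}\alpha-\phi^{k}\alpha)$ by $(-1)^{d_i-1}$. If $d_i$ is odd, $\sqrt{\Disc(F_i)}$ is $\phi$-invariant, hence lies in $\qtwo$, so $\Disc(F_i)$ is the square of a unit and $\Disc(F_i)\equiv1\pmod 8$. If $d_i$ is even, $\qtwo(\sqrt{\Disc(F_i)})$ is a quadratic subextension of the unramified field $K_i$, hence is itself unramified, hence equals the \emph{unique} unramified quadratic extension $\qtwo(\sqrt5)=\qtwo(\zeta_3)$; therefore the unit $\Disc(F_i)$ lies in the square class of $5$, which forces $\Disc(F_i)\equiv5\pmod 8$. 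Combining, $\Disc(F)\equiv\prod_i\Disc(F_i)\equiv5^{\,\#\{i:\,d_i\text{ even}\}}\pmod 8$, always $1$ or $5$, which is the first claim. Finally $s-r=\sum_i(d_i-1)\equiv\#\{i:\,d_i\text{ even}\}\pmod 2$, so $\Disc(F)\equiv1\pmod 8$ exactly when that count is even, i.e.\ exactly when $r\equiv s\pmod 2$.

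The step I expect to be the main obstacle is the even-degree case just invoked: the claim that a \emph{non-square} unit of $\ztwo$ generating an unramified quadratic extension must be $\equiv5\pmod 8$ rather than $3$ or $7$. This is precisely where characteristic $2$ genuinely parts from odd characteristic --- Theorem~\ref{thm-Stickel} needs no analogous refinement --- and it rests on the structure of $\qtwo^{*}/(\qtwo^{*})^{2}$ and the uniqueness of the unramified quadratic extension $\qtwo(\sqrt5)$. For the weaker ``$1$ or $5$'' half one could instead invoke Stickelberger's theorem that the discriminant of a monic integer polynomial is $\equiv0$ or $1\pmod 4$ (write $\Disc=(P+N)^{2}-4PN$ with $P-N=\prod_{i<j}(x_i-x_j)$ split into its even- and odd-permutation parts), but the parity statement still requires the $2$-adic input above.
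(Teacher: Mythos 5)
The paper does not prove this statement at all --- it is quoted as a known theorem of Dalen and Swan (ultimately going back to Stickelberger and Pellet) and used as a black box, so there is no in-paper argument to compare against. Your proof is correct and complete, and it is in substance the classical $2$-adic argument (essentially Swan's): Hensel-factor the given monic lift $F$ over $\ztwo$ along the coprime factorization of $f$, reduce to the irreducible pieces via $\Disc(F)=\prod_i\Disc(F_i)\prod_{i<j}\Res(F_i,F_j)^2$ with each cross-resultant an odd unit whose square is $1\bmod 8$, and then compute $\Disc(F_i)\bmod 8$ from the fact that $\qtwo[x]/(F_i)$ is unramified of degree $d_i$ with Frobenius acting as a $d_i$-cycle on the roots. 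You correctly isolate the one genuinely $2$-adic input --- that a unit $u\in\ztwo^*$ with $\qtwo(\sqrt{u})$ unramified quadratic must satisfy $u\equiv 5\pmod 8$, by uniqueness of the unramified quadratic extension and the fact that unit squares are exactly the units $\equiv 1\pmod 8$ --- which is precisely what distinguishes this from the odd-characteristic Theorem~\ref{thm-Stickel}. Two small points worth making explicit if you write this up: first, Hensel's lemma is applied to the \emph{given} lift $F$, so the argument really does cover ``any monic lift'' as the theorem demands (the conclusion $\Disc(F)\equiv 5^{\#\{i:\,d_i\ \mathrm{even}\}}\pmod 8$ depends only on $f$); second, the separability of each $f_i$, hence the unit-ness of each $\Disc(F_i)$, uses that irreducible polynomials over the perfect field $\ftwo$ are separable.
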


If $s$ is even and $\Disc(F)\equiv 1 \pmod{8}$, then
Theorem~\ref{thm-Stickelberger} asserts that $f(x)$ has an even
number of irreducible factors and therefore is reducible over
$\ftwo[x]$. Thus one can find necessary conditions for the
irreducibility of $f(x)$ by computing $\Disc(F)$ modulo 8.
\section{Stability of trinomials over finite  fields}

\subsection{Trinomials over the binary fields}

In this section we prove our main result about the instability of trinomials over the binary field $\ftwo$.  First we prove some results about the composition of arbitrary polynomials with trinomials.

\subsubsection{Composition of polynomials with trinomials}

\begin{lemma}\label{comp-trinomial}
 Let $f(x)=x^{2n}+x^{2n-s}+1,g(x)=x^{2m}+\sum_{i=0}^{2m-1}a_ix^i$ be two polynomials with integer coefficients such that both $s$ and $g(1)$ are odd numbers. Furthermore suppose that $a_{2m-1}$ is an even number  whenever $n=s$. Then 
	\[
\Res(gf(x), f'(x))\equiv 1 \pmod 8.
		\]
\end{lemma}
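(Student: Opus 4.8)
The plan is to relate $\Res(gf(x),f'(x))$ to the discriminants of the trinomials $f(x)-\alpha$ as $\alpha$ runs over the roots of $g$, then to insert the classical closed formula for the discriminant of a trinomial, and finally to settle the congruence by a $2$-adic valuation estimate in which the hypothesis on $a_{2m-1}$ intervenes in exactly one boundary case. First, since $g(f(x))$ is monic, \eqref{roots} gives $\Res(g(f(x)),f'(x))=\prod_\rho f'(\rho)$ over the roots $\rho$ of $g(f(x))$ counted with multiplicity; grouping these according to the root $\alpha=f(\rho)$ of $g$ they lie over and applying \eqref{roots} again to each monic polynomial $f(x)-\alpha$ of degree $2n$, one gets $\Res(g(f(x)),f'(x))=\prod_{\alpha}\Res(f(x)-\alpha,f'(x))$, the product over the roots $\alpha$ of $g$. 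In characteristic zero $\deg f'=2n-1$, so \eqref{ff'} applied to $F=f(x)-\alpha$ yields $\Res(f(x)-\alpha,f'(x))=(-1)^n\Disc(f(x)-\alpha)$, and as $g$ has $2m$ roots the sign disappears:
\[
\Res(g(f(x)),f'(x))=\prod_{\alpha}\Disc\bigl(f(x)-\alpha\bigr),
\]
a rational integer, being a symmetric function of the roots of the monic integer polynomial $g$.

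Next I would substitute the formula for the discriminant of a trinomial: for $x^{N}+ax^{K}+b$ with $d=\gcd(N,K)$, $N=dN_1$, $K=dK_1$,
\[
\Disc(x^{N}+ax^{K}+b)=(-1)^{N(N-1)/2}\,b^{K-1}\,d^{N}\bigl(N_1^{N_1}b^{N_1-K_1}-(-1)^{N_1}(N_1-K_1)^{N_1-K_1}K_1^{K_1}a^{N_1}\bigr)^{d}.
\]
Applied to $f(x)-\alpha=x^{2n}+x^{2n-s}+(1-\alpha)$, with $D=\gcd(2n,s)=\gcd(n,s)$, $N_1=2n/D$ and $K_1=(2n-s)/D$: since $s$ is odd, $D$, $K_1$ and $s/D=N_1-K_1$ are odd, $2n-s-1$ is even, and crucially $N_1=2(n/D)$ is even, so the formula collapses to
\[
\Disc(f(x)-\alpha)=(-1)^{n}(1-\alpha)^{2n-s-1}D^{2n}\bigl(N_1^{N_1}(1-\alpha)^{s/D}-(s/D)^{s/D}K_1^{K_1}\bigr)^{D}.
\]
Multiplying over the $2m$ roots $\alpha$ of $g$ and using $\prod_\alpha(1-\alpha)=g(1)$,
\[
\Res(g(f(x)),f'(x))=g(1)^{2n-s-1}\,D^{4mn}\,A^{D},\qquad A:=\prod_{\alpha}\bigl(N_1^{N_1}(1-\alpha)^{s/D}-(s/D)^{s/D}K_1^{K_1}\bigr)\in\ZZ.
\]
As $g(1)$ is odd and $2n-s-1$ is even, $g(1)^{2n-s-1}\equiv1\pmod8$; as $D$ is odd, $D^{4mn}=(D^{2})^{2mn}\equiv1\pmod8$; so everything reduces to showing $A\equiv1\pmod8$.

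For this I would work $2$-adically. Put $C=(s/D)^{s/D}K_1^{K_1}$, an odd integer, and $t=N_1^{N_1}/C$, so $v_2(t)=N_1v_2(N_1)\ge2$ because $N_1\ge2$ is even. Writing $A=(-C)^{2m}\prod_\alpha\bigl(1-t(1-\alpha)^{s/D}\bigr)=C^{2m}\sum_{k\ge0}(-t)^{k}e_k$ with $e_k=e_k\bigl((1-\alpha_1)^{s/D},\dots,(1-\alpha_{2m})^{s/D}\bigr)\in\ZZ$ and $v_2\bigl((-t)^{k}e_k\bigr)\ge2k$, only the terms $k=0,1$ survive modulo $8$, so $A\equiv C^{2m}(1-t\,e_1)\equiv1-t\,e_1\pmod8$, where $e_1=\sum_\alpha(1-\alpha)^{s/D}$. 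If $N_1\ge4$ then $v_2(t)=N_1v_2(N_1)\ge N_1\ge4$, hence $t\,e_1\equiv0\pmod8$ and $A\equiv1\pmod8$. The only case left is $N_1=2$, i.e.\ $D=n$; together with $0<s<2n$ this forces $s=n$, and then $K_1=s/D=1$, $C=1$, $t=4$, and $e_1=\sum_\alpha(1-\alpha)=2m-\sum_\alpha\alpha=2m+a_{2m-1}$, which is even precisely because of the hypothesis that $a_{2m-1}$ is even when $n=s$; so again $v_2(t\,e_1)\ge3$ and $A\equiv1\pmod8$. Combining the three factors, $\Res(g(f(x)),f'(x))\equiv1\pmod8$.

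I expect the crux to be the bookkeeping of the last paragraph: recognizing that $v_2(t)$ drops all the way to $2$ only when $N_1=2$, that $N_1=2$ is equivalent to $s=n$, and that $1-t\,e_1=1-4(2m+a_{2m-1})$ is then congruent to $1$ modulo $8$ if and only if $a_{2m-1}$ is even — which is exactly why the extra hypothesis is needed. Getting the parities of $D,N_1,K_1,s/D,2n-s-1$ right in the second paragraph and carrying along the harmless factor $D^{4mn}$ are the other points requiring care; the trinomial discriminant formula used above can be quoted, or re-derived from \eqref{roots} by factoring $f'(x)=x^{2n-s-1}(2nx^{s}+2n-s)$ and expanding $\Res(g(f(x)),f'(x))$ as a product over the roots of $2nx^{s}+2n-s$, which only uses the material of Section~\ref{Prem}.
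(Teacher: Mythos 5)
Your proposal is correct, and it reaches the conclusion by a genuinely different route from the paper. The paper factors $f'(x)=x^{2n-s-1}(2nx^{s}+2n-s)$, reduces to $\Res(gf(x),2nx^{s}+2n-s)$ modulo $8$, expands the product $\prod_{i,j}(2n\beta_{ij}^{s}+2n-s)$ in powers of $2n$ (only the terms up to $(2n)^{2}$ survive), and evaluates the resulting first and second symmetric functions of the $\beta_{ij}^{s}$ with the Newton identities from Section~\ref{Prem}; the case distinction $s=n$ versus $s\neq n$ enters through the value of $p_{2s}$, and the final congruence in the boundary case is $1+4a_{2m-1}\pmod 8$, exactly as in your computation. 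You instead pass to $\prod_{\alpha}\Disc(f(x)-\alpha)$, quote the closed trinomial discriminant formula (which is Swan's Theorem~2, already in the paper's bibliography \cite{Swan}), and finish with a $2$-adic valuation estimate. Your decomposition step, the parity bookkeeping for $D$, $N_1$, $K_1$, $s/D$, $2n-s-1$, and the identification of $N_1=2$ with $s=n$ are all correct (I checked the discriminant formula and your endgame against small examples), and the loose mixing of rational and integral quantities is harmless since $C$ is a $2$-adic unit and $A\in\ZZ$. What your approach buys is a cleaner conceptual explanation of why the only delicate case is $s=n$ and why the hypothesis on $a_{2m-1}$ is exactly what is needed there; what the paper's approach buys is self-containedness, using only the resultant identities and Newton identities already set up in the preliminaries rather than importing the trinomial discriminant formula.
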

\begin{proof}
Let $R=\Res(gf(x), f'(x))$. We have 
\[
f'(x)=2nx^{2n-1}+(2n-s)x^{2n-s-1}=x^{2n-s-1}(2nx^s+2n-s). 
\]
Thus using Corollary~\ref{cor1}, we have
\begin{eqnarray}
R&=&\Res(gf(x),x^{2n-s-1})\Res(gf(x),2nx^s+2n-s)\\\nonumber&=&(gf(0))^{2n-s-1}\Res(gf(x),2nx^s+2n-s)\\\nonumber&=&(g(1))^{2n-s-1}\Res(gf(x),2nx^s+2n-s)\nonumber,
\end{eqnarray}
and hence since both $s$ and $g(1)$ are odd numbers we get
\begin{equation}\label{modular}
R=\Res(gf(x),2nx^s+2n-s) \pmod 8.
\end{equation}
Now let $\alpha_1,\alpha_2,\ldots,\alpha_{2m}$ be the roots of $g(x)=0$ in some extension of rational numbers.  Then the roots of $gf(x)$ are the union of the roots of the equations $f(x)=\alpha_i$ for $i=1,2,\ldots,2m$. We denote by $\beta_{ij}$, $j=1,2,\ldots,2n$, the $2n$ roots of $f(x)=\alpha_i$. So the roots of $gf(x)=0$ are $\beta_{ij}$ for $1\le i\le 2m$ and $1\le j\le 2n$. Using Newton identities for each $i$, $1\le i\le 2m$, we have: 
\begin{equation}\label{mono-s-power}
p_s(\beta_{i1},\beta_{i2},\ldots,\beta_{i(2n)})=\sum_{j=1}^{2n}\beta_{ij}^{s}=-s,
\end{equation}
\begin{equation}\label{2s-power}
p_{2s}(\beta_{i1},\beta_{i2},\cdots,\beta_{i(2n)})=\sum_{j=1}^{2n}\beta_{ij}^{2s}=\left\{
\begin{array}{cl}
s, & \;\;\;\mbox{if } s<n, \\
2n\alpha_i-n,          & \;\;\;\mbox{if } s=n, \\
s, & \;\;\;\mbox{if } n<s<2n, \\
\end{array}
\right.
\end{equation}
and hence
\begin{equation}\label{s-power}
p_s(\beta_{11},\beta_{12},\ldots,\beta_{(2m)(2n)})=\sum_{i=1}^{2m}p_s(\beta_{i1},\beta_{i2},\ldots,\beta_{i(2n)})=-2ms.
\end{equation}

From~\eqref{roots} and \eqref{modular} it follows that
\begin{eqnarray}\nonumber
R=\prod_{i=1}^{2m}\prod_{j=1}^{2n}(2n\beta_{ij}^s+2n-s)=(2n-s)^{4mn}+(2n-s)^{4mn-1}(2n)p_s(\beta_{11},\beta_{12},\ldots,\beta_{(2m)(2n)})\\+(2n-s)^{4mn-2}(2n)^2e_2(\beta_{11}^s,\beta_{12}^s,\ldots,\beta_{(2m)(2n)}^s)\nonumber\\+(2n-s)^{4mn-3}(2n)^3S(\beta_{11},\beta_{12},\ldots,\beta_{(2m)(2n)}),\nonumber
\end{eqnarray}
where $S(\beta_{11},\beta_{12},\ldots,\beta_{(2m)(2n)})$ is a symmetric polynomial in the roots of $gf(x)=0$ and hence it is an integer number. Thus using~\eqref{s-power} and the fact that $s$ is an odd number we get
\[
R=1-(2n-s)(4mn)s+(2n)^2e_2(\beta_{11}^s,\beta_{12}^s,\ldots,\beta_{(2m)(2n)}^s)\pmod 8.
\]

Now let $q_{si}=p_s(\beta_{i1},\beta_{i2},\cdots,\beta_{i(2n)})$ and $r_{is}=e_2(\beta_{i1}^s,\beta_{i2}^s,\cdots,\beta_{i(2n)}^s)$. Then
\[
e_2(\beta_{11}^s,\beta_{12}^s,\ldots,\beta_{(2m)(2n)}^s)=e_2(q_{s1},q_{s2},\cdots,q_{s(2m)})+\sum_{i=1}^{2m}e_2(\beta_{i1}^s,\beta_{i2}^s,\cdots,\beta_{i(2n)}^s).
\]
By~\eqref{mono-s-power} for each $i$ we have $q_{si}=-s$ which yields
\[
e_2(q_{s1},q_{s2},\cdots,q_{s(2m)})=\binom{2m}{2}s^2.
\]
Also we have
\[
 e_2(\beta_{i1}^s,\beta_{i2}^s,\cdots,\beta_{i(2n)}^s)=\frac{1}{2}[p_s(\beta_{i1},\beta_{i2},\cdots,\beta_{i(2n)})^2-p_{2s}(\beta_{i1},\beta_{i2},\cdots,\beta_{i(2n)})],
\]
and hence using~\eqref{2s-power}
\begin{equation}\label{e-sum}
e_2(\beta_{i1}^s,\beta_{i2}^s,\cdots,\beta_{i(2n)}^s)=\left\{
\begin{array}{cl}
\frac{1}{2}(s^2-s), & \;\;\;\mbox{if } s<n, \\
\frac{1}{2}(n^2-2n\alpha_i+n),          & \;\;\;\mbox{if } s=n, \\
\frac{1}{2}(s^2-s), & \;\;\;\mbox{if } n<s<2n. \\
\end{array}
\right.
\end{equation}

From the above equations we get that if $s\neq n$, then
\[
e_2(\beta_{11}^s,\beta_{12}^s,\ldots,\beta_{(2m)(2n)}^s)=\binom{2m}{2}s^2+(2m)\frac{1}{2}(s^2-s)=m(2m-1)s^2+m(s^2-s)
\]
 
from which we get that 
\[
R=1-(2n-s)(4mn)s+(2n)^2(m(2m-1)s^2+m(s^2-s))\pmod 8
\]
if $n\neq s$. It is easy to see that in this case $R\equiv 1\pmod 8$. Now let $n=s$. Then
\[
e_2(\beta_{11}^s,\beta_{12}^s,\ldots,\beta_{(2m)(2n)}^s)=\binom{2m}{2}n^2+\sum_{i=1}^{2m}\frac{1}{2}(n^2-2n\alpha_i+n)=\binom{2m}{2}n^2+m(n^2+n)-n\sum_{i=1}^{2m}\alpha_i.
\]

Thus 
\[
R\equiv 1-4mn^3+4n^2[m(2m-1)n^2+m(n^2+n)+na_{2m-1}]\pmod 8.
\]

Since we have assumed that $s=n$ is an odd number, we deduce that
\[
R\equiv 1+4a_{2m-1}\equiv 1 \pmod 8.
\]
\end{proof}

\begin{cor}\label{Disc-2n}
	 Let $f(x)=x^{2n}+x^{2n-s}+1,g(x)=x^{2m}+\sum_{i=0}^{2m-1}a_ix^i$ be two polynomials with integer coefficients such that both $s$ and $g(1)$ are odd numbers. Furthermore suppose that $a_{2m-1}$ is an even number  whenever $n=s$. Then 
	\[
	\Disc(g(f(x)))\equiv \Disc(g(x))^{2n} \pmod 8.
	\]
\end{cor}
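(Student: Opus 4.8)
The plan is to deduce Corollary~\ref{Disc-2n} directly from Lemma~\ref{comp-trinomial} together with the formula~\eqref{ff'} relating the discriminant of a polynomial to a resultant with its derivative. First I would set $h(x)=g(f(x))$, a monic polynomial of degree $4mn$, and apply~\eqref{ff'} to $h(x)$: writing $h'(x)=g'(f(x))f'(x)$, the degree $l'$ of $h'$ and the sign $(-1)^{s(s-1)/2}$ that appear in~\eqref{ff'} must be computed, but since $h$ is monic the leading-coefficient factor $a^{s-l'-2}$ is $1$, so $\Disc(h)=\pm\Res(h,h')$. Then by Corollary~\ref{cor1}(ii), $\Res(h,h')=\Res(h,g'(f(x)))\cdot\Res(h,f'(x))$, and the second factor is exactly $\Res(gf(x),f'(x))$, which Lemma~\ref{comp-trinomial} tells us is $\equiv 1\pmod 8$.

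Next I would handle the remaining factor $\Res(h,g'(f(x)))$ and the sign. The cleanest route is to compare $\Disc(g(f(x)))$ with $\Disc(g(x))$ by exploiting the multiplicativity of the discriminant under such ``substitution'' compositions; indeed, applying~\eqref{ff'} to $g(x)$ itself gives $\Disc(g)=\pm\Res(g,g')$, and Lemma~\ref{Res-Disc} already packages the relationship between $\Res(u(v(x)),u'(v(x)))$ and $\Disc(u)^n$ in exactly the shape we need (with $u=g$, $v=f$, and $n=2n$ the degree of $f$). So I would invoke Lemma~\ref{Res-Disc} with $u=g$ (monic, $a=1$) and $v=f$ (monic, $b=1$) to get $\Res(g(f(x)),g'(f(x)))=[(-1)^{m(m-1)/2}]^{2n}\Disc(g(x))^{2n}=\Disc(g(x))^{2n}$, since the sign is raised to an even power. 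Combining: $\Res(h,h')=\Res(g(f(x)),g'(f(x)))\cdot\Res(gf(x),f'(x))\equiv \Disc(g(x))^{2n}\cdot 1\pmod 8$.

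Finally I would reconcile this with $\Disc(h)$ and $\Disc(g)$ via~\eqref{ff'}. The point is that $\Disc(h)=\varepsilon_1\Res(h,h')$ and $\Disc(g)=\varepsilon_2\Res(g,g')$ for signs $\varepsilon_1,\varepsilon_2\in\{\pm1\}$ depending only on the degrees (which are fixed here), and since raising $\Disc(g)$ to the power $2n$ kills the sign $\varepsilon_2^{2n}=1$, one obtains $\Disc(h)\equiv \varepsilon_1\,\Disc(g(x))^{2n}\pmod 8$ with $\varepsilon_1=\pm1$. To pin down $\varepsilon_1=+1$ one checks the parity of $s(s-1)/2$ where $s=\deg h=4mn$: since $4mn\equiv 0$ or $4\pmod 8$ according to the parity of $mn$, $s(s-1)/2$ is even, so $\varepsilon_1=(-1)^{s(s-1)/2}=1$. (One must also confirm that the extra leading-coefficient exponents in~\eqref{ff'} and in Lemma~\ref{Res-Disc} vanish because everything in sight is monic.) This yields $\Disc(g(f(x)))\equiv \Disc(g(x))^{2n}\pmod 8$, as claimed.

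The main obstacle I anticipate is bookkeeping the signs and the degree of the derivative carefully: in positive characteristic (and here we are working over $\ZZ$ but ultimately reducing mod $8$, which is not a field), the degree $l$ of $u'$ can drop, so the exponents $a^{s-l-2}$ in~\eqref{ff'} and $a^{-m+l+2}$ in Lemma~\ref{Res-Disc} need to be tracked; fortunately all polynomials involved are monic, so these leading-coefficient factors are all $1$ regardless of $l$. The only genuinely delicate point is verifying that the accumulated $\pm1$ is really $+1$, which reduces to the elementary congruence $4mn(4mn-1)/2\equiv 0\pmod 2$, and that the $\Res(g,g')$-versus-$\Disc(g)$ sign is annihilated by the even exponent $2n$.
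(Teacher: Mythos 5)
Your proposal is correct and follows essentially the same route as the paper: decompose $\Res(gf,(gf)')$ via the chain rule and Corollary~\ref{cor1}(ii), kill the $\Res(gf,f')$ factor with Lemma~\ref{comp-trinomial}, and evaluate $\Res(gf,g'(f))$ with Lemma~\ref{Res-Disc}, with all signs and leading-coefficient factors trivial because everything is monic and $\deg(gf)=4mn$ is divisible by $4$. Your sign bookkeeping is in fact slightly more explicit than the paper's own one-line justification.
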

\begin{proof}
Since degree of $gf(x)$ is divisible by 4, using Corollaries~\ref{cor1} and~\ref{ff'} we have
\[
\Disc(gf(x))=\Res(gf(x),f'(x)g'(f(x)))=\Res(gf(x),f'(x))\Res(gf(x),g'(f(x))),
\]	

and hence using Lemmata~\ref{comp-trinomial} and ~\ref{Res-Disc} we get
\[
\Disc(gf(x))=\Res(gf(x),g'(f(x)))=\Disc(g(x))^{2n}\pmod 8.
\]

\end{proof}	
\subsubsection{Instability of trinomials over $\ftwo$ }
The following is our main result about the trinomials over $\ftwo$.
\begin{theorem}\label{trim-even}
Let $f(x)=x^{2n}+x^{2n-s}+1$ be a trinomial over $\ftwo$. Then $f^{(3)}(x)$ is not an irreducible polynomial over $\ftwo$ and hence $f(x)$ is not stable.
\end{theorem}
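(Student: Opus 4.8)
The plan is to lift $f(x)=x^{2n}+x^{2n-s}+1$ to $\ZZ[x]$ and track the discriminant of the third iterate modulo $8$, then invoke Theorem~\ref{thm-Stickelberger}. Since the iterate $f^{(3)}(x)$ has degree $(2n)^3$, which is divisible by $4$, once we show $\Disc\big(f^{(3)}(x)\big)\equiv 1\pmod 8$ (for a monic integer lift), Theorem~\ref{thm-Stickelberger} forces the number of irreducible factors to be even, hence $f^{(3)}(x)$ is reducible over $\ftwo$, and therefore $f(x)$ is not stable.

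The key computation is an iterated application of Corollary~\ref{Disc-2n}. First I would observe that $s$ is odd: if $s$ were even then $f(x)=x^{2n}+x^{2n-s}+1$ would be reducible over $\ftwo$ already (it would be a perfect square, since over $\ftwo$ a polynomial in $x^2$ is a square), so we may assume $s$ is odd and $f(x)$ itself is irreducible, as otherwise there is nothing to prove. Then I would compute $f(f(x))$ and check the hypotheses of Corollary~\ref{Disc-2n} with $g=f$: here $g(1)=f(1)=1+1+1=1$ is odd, so the first two hypotheses hold; for the clause about the coefficient of $x^{2m-1}$ when $n=s$, note that when $n=s$ the trinomial is $x^{2n}+x^{n}+1$, whose second-highest coefficient is $0$, which is even, so that hypothesis is automatically satisfied as well. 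Thus $\Disc(f^{(2)}(x))\equiv \Disc(f(x))^{2n}\pmod 8$. Repeating with $g=f^{(2)}(x)$ (which is monic, has $g(1)=f^{(2)}(1)=f(f(1))=f(1)=1$ odd, and again has even second-highest coefficient when $n=s$ because $f^{(2)}$ is a polynomial in $x^n$ with only a top-degree term of odd-exponent nature), Corollary~\ref{Disc-2n} gives $\Disc(f^{(3)}(x))\equiv \Disc(f^{(2)}(x))^{2n}\equiv \big(\Disc(f(x))^{2n}\big)^{2n}\pmod 8$. Finally I would compute $\Disc(f(x))\bmod 8$ directly from Corollary~\ref{ff'} and Corollary~\ref{cor1}: as in the proof of Lemma~\ref{comp-trinomial} with $m=0$ (i.e.\ $g=1$), one finds $\Res(f(x),f'(x))\equiv$ an odd number mod $8$, and raising any odd integer to the $4n^2$ power gives $1\bmod 8$. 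Hence $\Disc(f^{(3)}(x))\equiv 1\pmod 8$.

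There are a couple of bookkeeping points that need care. One is verifying the hypothesis of Corollary~\ref{Disc-2n} at each stage, particularly the condition on the coefficient of $x^{\deg g -1}$ when $n=s$; I would argue that every iterate $f^{(k)}(x)$ for $k\ge 1$ is, when $n=s$, actually a polynomial in $x$ whose top-degree-minus-one coefficient vanishes — more generally, in that case $f$ has no $x^{2n-1}$ term so $f^{(k)}$ has a gap just below the leading term — so the hypothesis is free. The other point is the base case discriminant $\Disc(f(x))\bmod 8$ for a monic $\ZZ$-lift: I would lift to $x^{2n}+x^{2n-s}+1$ over $\ZZ$ and redo the power-sum/Newton-identity computation of Lemma~\ref{comp-trinomial} in the degenerate case $g=1$, $m=0$, which makes the product over $i$ empty and collapses the argument to showing $\Res(f,f')$ is odd — this is immediate since $\Res(f,f')\equiv \Res(f,f')\bmod 2$ and mod $2$ one computes $f,f'$ have no common factor precisely because $s$ is odd.

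The main obstacle I anticipate is not any single hard estimate but rather making the induction on iterates airtight: one must confirm that each successive $g=f^{(k)}(x)$ genuinely satisfies \emph{all} the hypotheses of Corollary~\ref{Disc-2n} (monic — clear; $g(1)$ odd — follows since $f(1)=1$ is a fixed point, so $f^{(k)}(1)=1$; even subleading coefficient when $n=s$ — the gap argument above), and that the degree of $g$ is always even so that the corollary applies. Once this structural induction is in place, the congruence $\Disc(f^{(3)}(x))\equiv \big(\text{odd}\big)^{(2n)^2}\equiv 1\pmod 8$ is forced, and Theorem~\ref{thm-Stickelberger} closes the argument. A brief remark on why we only need the third iterate (and not, say, a general $f^{(k)}$) is also worth including: the hypotheses propagate for all $k$, so in fact $f^{(k)}(x)$ is reducible for every $k\ge 3$, but stating it for $k=3$ suffices to contradict stability.
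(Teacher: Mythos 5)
Your overall strategy is the same as the paper's: lift to $\ZZ[x]$, use Corollary~\ref{Disc-2n} to propagate the discriminant modulo $8$ through the iterates, and invoke Theorem~\ref{thm-Stickelberger}. (The paper applies the corollary only once, concludes that $f^{(2)}$ is already reducible when $n>1$, and deduces the statement about $f^{(3)}$ from that; your double application is a harmless variant, and your observation that one only needs $\Disc(F)$ odd, since any odd integer raised to the even power $2n$ is $1$ modulo $8$, is a small simplification.)

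However, there is a genuine gap at $n=1$. You justify the hypothesis of Corollary~\ref{Disc-2n} in the case $n=s$ by saying that $x^{2n}+x^{n}+1$ has vanishing second-highest coefficient; this is true only for $n\ge 2$. For $n=s=1$ the trinomial is $F(x)=x^2+x+1$, whose coefficient of $x^{2n-1}=x$ is $1$, which is odd, so the corollary does not apply with $g=F$ --- and its conclusion is in fact false there: $ff(x)=x^4+x+1$ is irreducible over $\ftwo$, so $\Disc(FF)\equiv 5\pmod 8$, not $\Disc(F)^{2}\equiv 1$. Thus your chain $\Disc(F^{(3)})\equiv\Disc(F^{(2)})^{2n}\equiv\Disc(F)^{4n^2}$ breaks at its first link for $f=x^2+x+1$, and as written your argument would "prove" that $x^4+x+1$ is reducible. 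The same defect infects your "gap just below the leading term" induction, which also presupposes that $f$ itself has no $x^{2n-1}$ term. The case $n=1$ must be treated separately: the paper does so by exhibiting the explicit factorization $f^{(3)}(x)=x^8+x^4+x^2+x+1=(x^4+x^3+1)(x^4+x^3+x^2+x+1)$; alternatively, you could note that the subleading coefficient of $F^{(2)}$ over $\ZZ$ equals $2$ when $n=s=1$, so Corollary~\ref{Disc-2n} \emph{does} apply with $g=F^{(2)}$ and gives $\Disc(F^{(3)})\equiv 5^2\equiv 1\pmod 8$. Either fix works, but one of them is needed. A minor further point: your hypothesis checks should be carried out over $\ZZ$, not $\ftwo$ (e.g.\ $F(1)=3$ and $F^{(2)}(1)=F(3)$, not $1$); the parity conclusions survive, but the statement "$f(1)=1$ is a fixed point" is not literally the verification the corollary requires.
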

\begin{proof}
If $s$ is an even number, then $f(x)=(x^n+x^{n-\frac{s}{2}}+1)^2$ which means that $f(x)$ is not irreducible and hence $f^{(3)}(x)$ is not an irreducible polynomial over $\ftwo$.  Now let $s$ be an odd number. If $n=1$, then $s=1$, $f(x)=x^2+x+1$, $ff(x)=x^4+x+1$, and $$f^{(3)}(x)=x^8+x^4+x^2+x+1= (x^4 + x^3 + 1)(x^4 + x^3 + x^2 + x + 1).$$ So let $n>1$. If $f(x)$ is not irreducible, then the claim is trivial. So assume that $f(x)$ is an irreducible polynomial and $F(x)=x^{2n}+x^{2n-s}+1$ is a lift of $f(x)$ to the integers. From Theorem~\ref{thm-Stickelberger} we deduce that $\Disc(F(x))\equiv 5 \pmod 8$. Now notice that $F(1)=3$ and since $n>1$, the coefficient of $x^{2n-1}$ is zero in $F(x)$ whenever $n=s$. Thus using Corollary~\ref{Disc-2n} we get $\Disc(FF(x))\equiv 1\pmod 8$ which in turn using Theorem~\ref{thm-Stickelberger} implies that $ff(x)$ is not an irreducible polynomial over $\ftwo$. 	
	
\end{proof}	
\begin{remark}
The conclusion of Theorem~\ref{trim-even} does not hold for trinomials of odd degree. For example, if $f(x)=x^3+x^2+1$, our limited computations with MAGMA computer algebra package shows that $f^{(n)}(x)$ is irreducible for $n\le 10$, and even more, it is primitive for $n\le 6$. Our guess is that it is stable over $\ftwo$. 
\end{remark}

\begin{remark}
	It is possible to prove results similar to the results of this section for monic polynomials of even degree over finite fields of characteristic two. 
\end{remark}

\subsection{Polynomials over fields of odd characteristic}
In this section we consider trinomials over the finite fields of odd characteristic and show that some families of them are not stable.  

\begin{theorem}
	Let  $p>2$ be a prime number such that $p\mid n$,
	and let $f(x)=x^{2n}+ax^{2s+1}+b$ be a trinomial over $\FF_{p^t}$ and $g(x)$ be a monic polynomial of $\deg(g)=2m$ in $\FF_{p^t}[x]$.
	Then $gf(x)$ is reducible. In particular $f(x)$ is not stable.
\end{theorem}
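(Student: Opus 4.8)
The plan is to control the parity of the number of distinct irreducible factors of $g(f(x))$ through the Pellet--Stickelberger criterion (Theorem~\ref{thm-Stickel}), valid here because $q=p^t$ is an odd prime power. Note that $g(f(x))$ is monic of degree $N=2m\cdot 2n=4mn$, which is divisible by $4$. So the whole theorem reduces to the single claim that \emph{$\Disc(g(f(x)))$ is a square in $\FF_{p^t}$}, possibly $0$: indeed, if $\Disc(g(f(x)))=0$ then $g(f(x))$ is not squarefree and, having degree $\ge 4$, is reducible; and if $\Disc(g(f(x)))$ is a nonzero square then Theorem~\ref{thm-Stickel} forces the number $r$ of distinct irreducible factors to satisfy $r\equiv N\equiv 0\pmod 2$, so $r\ge 2$ and $g(f(x))$ is again reducible.

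The hypothesis $p\mid n$ enters precisely in the computation of this discriminant. Since $p$ is odd, $p\mid n$ gives $2n\equiv 0\pmod p$, and hence in $\FF_{p^t}[x]$
\[
f'(x)=2n\,x^{2n-1}+a(2s+1)\,x^{2s}=a(2s+1)\,x^{2s},
\]
a single monomial. First I would set aside the degenerate case $\bigl(g(f(x))\bigr)'=g'(f(x))\,f'(x)=0$, which happens exactly when $f'=0$ (i.e.\ $a(2s+1)=0$ in $\FF_{p^t}$) or $g'=0$. In either subcase $g(f(x))$ has vanishing derivative, so it lies in $\FF_{p^t}[x^p]$; since $\FF_{p^t}$ is perfect, $g(f(x))=h(x)^p$ with $h\in\FF_{p^t}[x]$ of degree $4mn/p\ge 1$, and a proper $p$-th power is reducible. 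So from then on $c:=a(2s+1)\ne 0$ in $\FF_{p^t}$, $f'(x)=c\,x^{2s}$, and $g'\ne 0$.

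In this main case I would apply the formula~\eqref{ff'} to $F=g(f(x))$: its leading coefficient is $1$ and $4\mid N$, so the sign $(-1)^{N(N-1)/2}$ equals $1$ and $\Disc(g(f(x)))=\Res\bigl(g(f(x)),\,g'(f(x))\,c\,x^{2s}\bigr)$, which by Corollary~\ref{cor1}(ii) factors as $\Res\bigl(g(f(x)),g'(f(x))\bigr)\cdot\Res\bigl(g(f(x)),c\,x^{2s}\bigr)$. The first factor is handled by Lemma~\ref{Res-Disc} with $u=g$ and $v=f$ (both monic, so the term $a^{-m+l+2}b^{ml}$ there contributes $1$): it equals $\bigl[(-1)^{\binom{2m}{2}}\bigr]^{2n}\Disc(g)^{2n}=\Disc(g)^{2n}$. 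For the second factor, \eqref{roots} gives $\Res\bigl(g(f(x)),c\,x^{2s}\bigr)=c^{N}\prod_\rho \rho^{2s}$, where $\rho$ ranges over the roots of $g(f(x))$, and $\prod_\rho\rho=(-1)^{N}g(f(0))=g(b)$, so this factor is $c^{4mn}g(b)^{2s}$. Therefore
\[
\Disc\bigl(g(f(x))\bigr)=\Disc(g)^{2n}\,c^{4mn}\,g(b)^{2s}=\bigl(\Disc(g)^{n}\,c^{2mn}\,g(b)^{s}\bigr)^{2},
\]
a square in $\FF_{p^t}$, which closes the argument. The final sentence of the theorem follows on taking $g=f$, whereby $f^{(2)}(x)=f(f(x))$ is reducible and $f$ is not stable.

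The only point I expect to require genuine care is the characteristic-$p$ bookkeeping: checking that $p\mid n$ really does collapse $f'$ to the monomial $c\,x^{2s}$, pinning down the exact subcases in which $f'$ or $g'$ vanishes (so that \eqref{ff'} and Lemma~\ref{Res-Disc} are applied only to polynomials with nonzero derivative), and verifying that every exponent of $-1$ that occurs — in \eqref{ff'}, in Lemma~\ref{Res-Disc}, and in $\prod_\rho\rho=(-1)^N g(b)$ — is even, so that no spurious non-square sign survives. Everything else is a routine manipulation of the resultant identities already recorded in Section~\ref{Prem}.
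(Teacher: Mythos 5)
Your proposal is correct and follows essentially the same route as the paper: reduce to showing $\Disc(g(f(x)))$ is a square via Theorem~\ref{thm-Stickel}, use $p\mid n$ to collapse $f'$ to $a(2s+1)x^{2s}$, split the resultant with Corollary~\ref{cor1}, and evaluate the two factors by \eqref{roots} and Lemma~\ref{Res-Disc}. Your explicit handling of the degenerate cases ($\Disc=0$, or vanishing derivative forcing $g(f(x))$ to be a $p$-th power) is a point of care the paper's proof passes over silently, but it does not change the argument.
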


\begin{proof}
	It suffices to show that the discriminant of $gf(x)$ is a quadratic residue in $\FF_{p^t}$. Since then by Theorem~\ref{thm-Stickel}, $gf(x)$ has an even number of factors in $\FF_{p^t}[x]$ and hence it is reducible. We have 
	$f'(x)=a(2s+1)x^{2s}$. Since the degree of $gf(x)$ is divisible by $4$, from~\eqref{ff'} we get
	$$
	\Disc(gf(x))=\Res(gf(x),f'(x)g'(f(x)))=\Res( gf(x),f'(x)) \Res(gf(x),g'(f(x))).
	$$
	Now on the one hand using Corollary~\ref{cor1} we have
	$$
	\Res( gf(x),f'(x))= (a(2s+1))^{4nm}g(f(0))^{2s}=(a(2s+1))^{4mn}g(b)^{2s}
	$$
	which shows that $\Res(gf(x),g'(f(x)))$ is a quadratic residue, and on the other hand from Lemma~\ref{Res-Disc} we have
	$$
	\Res(gf(x),g'(f(x)))=\Disc(g(x))^{2n}.
	$$
 Thus from the equations above we conclude that $\Disc(gf(x))$ is a quadratic residue in $\FF_{p^t}$ which finishes the proof.
\end{proof}

In the theorem above, we dealt with the polynomials of even degree. It is possible to apply Theorem~\ref{thm-Stickelberger} to prove instability of some families of trinomials of odd degree. For example we have the following theorem.
\begin{theorem}
 Let $p\not\equiv 1\pmod 8$, and let $f(x)=x^{p}+ax^2+b$ be a polynomial over $\FF_p$. Furthermore suppose that $ab=-3$.
 Then $f(x)$ is not stable over $\FF_p$.
\end{theorem}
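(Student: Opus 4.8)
The plan is to prove instability by producing an iterate of $f$ whose discriminant is a nonzero non-square in $\fp$, and then invoking Theorem~\ref{thm-Stickel}. Every iterate $f^{(k)}(x)$ has odd degree $p^{k}$, so if $f^{(k)}(x)$ were irreducible it would consist of a single (distinct) irreducible factor and Theorem~\ref{thm-Stickel} would force $\Disc(f^{(k)})$ to be a square in $\fp$. Hence it is enough to exhibit some $k$ with $\Disc(f^{(k)})$ a nonzero non-square, and I claim that $k=2$ or $k=3$ always works. One may assume $p\ge 5$: when $p=3$ the hypothesis $ab=-3$ reads $ab=0$, so $a=0$ or $b=0$ and $f$ is already reducible, while $p=2$ is degenerate because $x^{p}+ax^{2}+b$ then collapses in degree.

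The computation rests on the fact that $p\mid\deg f$, so that $f'(x)=2ax$ and the derivative of an iterate telescopes,
\[
\bigl(f^{(k)}\bigr)'(x)=f'\!\bigl(f^{(k-1)}(x)\bigr)\cdots f'\!\bigl(f(x)\bigr)\,f'(x)=(2a)^{k}\prod_{j=0}^{k-1}f^{(j)}(x).
\]
Feeding this into the discriminant--resultant identity~\eqref{ff'} (being careful, as the discussion in Section~\ref{Prem} warns, that $\deg\bigl(f^{(k)}\bigr)'$ lies far below $p^{k}-1$, so the exponent of the leading coefficient really matters), and then using multiplicativity of the resultant (Corollary~\ref{cor1}), the value $\Res(F,-x)=F(0)$, Lemma~\ref{Res-Disc}, and the observation that every root $\gamma$ of $f^{(j)}(x)$ satisfies $f^{(k)}(\gamma)=f^{(k-j)}(0)$, one arrives --- after reducing modulo $p$, where $p$-th powers of elements of $\fp$ are trivial --- at an identity of the shape
\[
\Disc\bigl(f^{(k)}\bigr)=(-1)^{k}\leg{-1}{p}^{k}(2a)^{k}\prod_{m=1}^{k}f^{(m)}(0)\qquad\text{in }\fp .
\]
For $k=2$ this is the exact analogue of Corollary~\ref{Disc-2n} with $g=f$, and for $k=3$ one iterates the same step once more.

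Now $ab=-3$ is precisely the input that evaluates the right-hand side: using $b^{p}=b$ and $(-b)^{p}=-b$ in $\fp$ one finds $f(0)=b$, $f(f(0))=f(b)=2b+ab^{2}=b(2+ab)=-b$, and $f^{(3)}(0)=f(-b)=-b+ab^{2}+b=ab^{2}$, so that
\[
\Disc\bigl(f^{(2)}\bigr)=-4(ab)^{2}=-36,\qquad
\Disc\bigl(f^{(3)}\bigr)=8\leg{-1}{p}(ab)^{4}=648\leg{-1}{p},
\]
both nonzero in $\fp$ since $p\ge 5$. Then I split on $p\bmod 8$. If $p\equiv 3\pmod 4$ (that is, $p\equiv 3$ or $7\pmod 8$) then $\leg{\Disc(f^{(2)})}{p}=\leg{-36}{p}=\leg{-1}{p}=-1$, so $\Disc(f^{(2)})$ is a nonzero non-square; being nonzero it is separable, and Theorem~\ref{thm-Stickel} forces $f^{(2)}$ to have an even number of distinct irreducible factors, hence $f^{(2)}$ is reducible. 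If instead $p\equiv 5\pmod 8$ then $\Disc(f^{(3)})=648=2^{3}3^{4}$ and $\leg{648}{p}=\leg{2}{p}^{3}=\leg{2}{p}=-1$ by the standard evaluation of the Legendre symbol of $2$, and the same argument applied to $f^{(3)}$ shows $f^{(3)}$ is reducible. Since $p\not\equiv 1\pmod 8$ forces $p\equiv 3,5,7\pmod 8$, one of these two cases always occurs, so $f$ is not stable.

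The step I expect to be the genuine obstacle is the displayed discriminant identity in the second paragraph: pinning down the sign $(-1)^{k}\leg{-1}{p}^{k}$ and the collapse of the various resultant factors requires care exactly because of the characteristic-$p$ phenomenon $\deg f'<\deg f-1$, the same delicacy already handled in Lemma~\ref{comp-trinomial} and Corollary~\ref{Disc-2n}. Everything downstream of that identity is routine quadratic reciprocity.
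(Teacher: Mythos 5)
Your proposal is correct and follows essentially the same route as the paper: it applies the Pellet--Stickelberger criterion (Theorem~\ref{thm-Stickel}) to $f^{(2)}$ when $p\equiv 3\pmod 4$ and to $f^{(3)}$ when $p\equiv 5\pmod 8$, arriving at the same discriminant values ($-36$, and $648$ versus the paper's $-18$, which agree up to square factors and the sign $\leg{-1}{p}$ that is trivial in the relevant case). Your closed-form product formula for $\Disc(f^{(k)})$ is just a telescoped version of the paper's recursive computation via Lemma~\ref{Res-Disc}, though you are somewhat more careful about the sign $(-1)^{s(s-1)/2}$ and the degenerate case $p=3$.
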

\begin{proof}
From Corollary~\ref{ff'}, we have $\Disc(f(x))=-b(2a)^{p}=-b(2a)=6$ and thus using Lemma \ref{Res-Disc} we get
\begin{eqnarray} 
 \Disc(f(f(x)))&=&\Disc(f)^{p}\Res(ff,f')=6\Res(ff,f')
  =6\Res(ff,f')\nonumber\\&=&-12aff(0)=-12a(b^{p}+ab^2+b)=-12a(ab^2+2b)=-36. \nonumber
\end{eqnarray}  
Now if $p\equiv 3,7\pmod 8$, then $\Disc(ff(x))=-36$ is a quadratic non-residue and hence from Theorem~\ref{thm-Stickel} it follows that $ff(x)$ is not irreducible. So in order to finish the proof we need to prove the claim for $p\equiv 5\pmod 8$. Now we have 
 \[
 \Disc(fff(x))=\Disc(ff(x))^p \Res(fff,f')
 \] 
and hence
\begin{align*}
\Res(fff,f')=-2afff(0)&=-2a((ab^2+2b)+a(ab^2+2b)^2+b)\\
&=-2ab(ab+3+ab(ab+2)^2)=-18.
\end{align*}
But as $-2$ is a quadratic non-residue when $p\equiv 5\pmod 8$, it follows that $\Disc(fff(x))=-18$ is a quadratic non residue and thus $fff(x)$ is not irreducible by Theorem \ref{thm-Stickel}.
\end{proof}

\section{Polynomials of higher weights}

It is possible to prove results similar to the results of the previous section for some families of polynomials which are of higher weight, i.e., polynomials which have more than three nonzero terms. As such are the following theorems about some families of polynomials over the binary field. The proofs of the following theorems are very similar to the proofs of theorems of the previous section and hence we omit them.
\begin{theorem}\label{higher-weight}
	Let $f(x)=x^{2n}+x^s+g(x^8)+1$ be a polynomial over $\ftwo$ such that $8\deg(g)<s$. Then $f(x)$ is not stable over $\ftwo$. 
\end{theorem}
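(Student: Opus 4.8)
The plan is to repeat, for this four-term family, the three moves used for trinomials over $\ftwo$: reduce instability to the reducibility of the second iterate; write the discriminant of that iterate as a product of resultants; and compute the relevant resultant modulo $8$ by Newton's identities, treating the block $g(x^{8})$ as an error term that is invisible modulo $8$. First I dispose of the easy cases. If $s$ is even then every exponent occurring in $f$ is even, so $f\in\ftwo[x^{2}]$ is a perfect square and $f$ is not stable; and if $s$ is odd but $f$ is already reducible we are done as well. So assume $s$ odd and $f$ irreducible. Then $\deg f=2n$ is even, so Theorem~\ref{thm-Stickelberger} forces $\Disc(F)\equiv 5\pmod 8$ for the monic integer lift $F(x)=x^{2n}+x^{s}+g(x^{8})+1$. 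Since $\deg(F(F(x)))$ is divisible by $4$, by~\eqref{ff'} and Corollary~\ref{cor1} we have
\[
\Disc(FF(x))=\Res\bigl(FF(x),F'(x)\bigr)\,\Res\bigl(FF(x),F'(F(x))\bigr),
\]
and Lemma~\ref{Res-Disc} applied with $u=v=F$ identifies the second factor with $\Disc(F)^{2n}\equiv 5^{2n}\equiv 1\pmod 8$. Hence it suffices to prove $\Res(FF(x),F'(x))\equiv 1\pmod 8$: then $\Disc(FF(x))\equiv 1\pmod 8$, so by Theorem~\ref{thm-Stickelberger} the even-degree polynomial $ff(x)$ has an even number of irreducible factors and is therefore reducible, and $f$ is not stable.

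The core step, an analogue of Lemma~\ref{comp-trinomial} and Corollary~\ref{Disc-2n}, is the congruence $\Res(GF(x),F'(x))\equiv 1\pmod 8$ for an arbitrary monic $G$ of even degree with $G(1)$ odd; this applies to $G=F$ at the end because $f$ irreducible of degree $>1$ forces both $g(0)$ and $g(1)$ to be even, which makes $F(0)$ and $F(F(0))$ odd. Write $F'(x)=\bigl(2nx^{2n-1}+sx^{s-1}\bigr)+8\,x^{7}g'(x^{8})$. Since $GF$ is monic, $\Res(GF,F')=\prod_{\beta}F'(\beta)$, the product taken over the roots $\beta$ of $GF$; expanding this product and collecting terms according to how many factors contribute the error block, each coefficient of a power of $8$ is a symmetric function of the $\beta$'s with integer coefficients, hence an integer, and only the term free of the error block survives modulo $8$:
\[
\Res(GF,F')\equiv GF(0)^{s-1}\prod_{\beta}\bigl(2n\beta^{2n-s}+s\bigr)\pmod 8 .
\]
The first factor is an odd number raised to an even power, hence $\equiv 1\pmod 8$. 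Writing the second factor as $s^{N}+2n\,s^{N-1}p_{2n-s}(\{\beta\})+4n^{2}e_{2}(\{\beta^{2n-s}\})+(\text{multiple of }8)$, with $N=\deg GF$, one evaluates $p_{2n-s}$ and the $e_{2}$-term exactly as in Lemma~\ref{comp-trinomial}: grouping the roots of $GF$ into the roots of the polynomials $F(x)-\alpha_i$, where $\alpha_i$ ranges over the roots of $G$, Newton's identities applied to each $F(x)-\alpha_i$ give that the power sum $p_{2n-s}$ of its roots equals $-2n$, and in the generic range $p_{2(2n-s)}=2n$. This is exactly where $8\deg g<s$ is used: it places the whole block $g(x^{8})$, and in particular the constant term $g(0)+1-\alpha_i$, strictly below the coefficients of $F(x)-\alpha_i$ that control these low power sums, so the arithmetic coincides with that for the trinomial $x^{2n}+x^{s}+1$ treated in Lemma~\ref{comp-trinomial}. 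Substituting and using that $s$ is odd collapses everything to $\Res(GF,F')\equiv 1\pmod 8$.

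The hard part will be the boundary bookkeeping, exactly as in Lemma~\ref{comp-trinomial} with its split into the cases $s<n$, $s=n$, $n<s<2n$ and its hypothesis ``$a_{2m-1}$ even whenever $n=s$''. When $2(2n-s)$ exceeds $\deg(F(x)-\alpha_i)=2n$, that is when $s<n$, or when it reaches as far down as the $g$-block or the constant term, the power sum $p_{2(2n-s)}$ of the roots of $F(x)-\alpha_i$ acquires a dependence on $\alpha_i$, so one must control $\sum_i\alpha_i$, which equals minus the coefficient of $x^{2m-1}$ in $G$; for the iterate $G=F$ this vanishes because the coefficient of $x^{2n-1}$ in $F$ is zero whenever $s\neq 2n-1$, while the case $s=2n-1$ is handled directly. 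Verifying that in each such configuration the extra contributions are still multiples of $8$ is the delicate part, but it works out; the remaining degenerate small-degree cases (where $2n-s$ is too small for the Newton recursion above to apply verbatim) are settled by direct computation, just as $f=x^{2}+x+1$ was in the proof of Theorem~\ref{trim-even}.
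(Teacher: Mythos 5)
Your proposal follows exactly the route the paper intends for this theorem (the paper omits the proof, stating only that it is ``very similar'' to the trinomial case): reduce to $\Disc(FF)\equiv 1\pmod 8$ via Theorem~\ref{thm-Stickelberger}, split off $\Res(FF,F'(F(x)))=\Disc(F)^{2n}$ by Lemma~\ref{Res-Disc}, and kill the $8x^7g'(x^8)$ block modulo $8$ before running the Newton-identity computation of Lemma~\ref{comp-trinomial}. The only flaw is a transcription slip in the quoted power sums: for $F(x)-\alpha_i=x^{2n}+x^s+\cdots$ the gap is $2n-s$, so Newton's identities give $p_{2n-s}=-(2n-s)$ and, generically, $p_{2(2n-s)}=2n-s$ (not $-2n$ and $2n$); with the corrected values the expansion still collapses to $\Res(FF,F')\equiv 1\pmod 8$, and the hypothesis $8\deg g<s$ plays precisely the role you assign it, so the argument stands.
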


\begin{theorem}
	Let $f(x)=g(x^4)+x^s+1$ be a polynomial over $\ftwo$ such that $s<4\deg(g)$. Then $f(x)$ is not stable over $\ftwo$. 
\end{theorem}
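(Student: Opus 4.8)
\medskip
\noindent\textbf{Proof proposal.} The plan is to run the argument of Theorem~\ref{trim-even} with the trinomial replaced by $f(x)=g(x^{4})+x^{s}+1$; the only new ingredient needed is a clean formula for the derivative of the second iterate over $\ftwo$, which makes the analogue of Lemma~\ref{comp-trinomial} work thanks to the hypothesis $s<4\deg g$.

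First I would dispose of the case $s$ even. Over $\ftwo$ one has $g(x^{4})=g(x)^{4}$, so writing $s=2t$ gives $f(x)=g(x)^{4}+(x^{t})^{2}+1=\bigl(g(x)^{2}+x^{t}+1\bigr)^{2}$, hence $f$ is reducible and not stable. Assume from now on that $s$ is odd; if $f$ is reducible we are done, so assume $f$ is irreducible, whence $\deg f=4\deg g\ge 4$, $g(0)=0$ (since $x\nmid f$) and $g(1)=1$ (since $f(1)\neq 0$ and $f(1)=g(1)$ in $\ftwo$). I claim $f^{(2)}$ is reducible. Note it is squarefree: over $\ftwo$ one has $f'(x)=sx^{s-1}=x^{s-1}$, so
\[
(f\circ f)'(x)=f'(f(x))\,f'(x)=x^{s-1}f(x)^{s-1},
\]
and this is coprime to $f\circ f$ because $(f\circ f)(0)=f(1)=1\neq 0$ and no root of $f$ is a root of $f\circ f$.

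Let $F\in\ZZ[x]$ be the monic lift of $f$ and $G\in\ZZ[x]$ that of $g$. As in the proof of Corollary~\ref{Disc-2n} (using~\eqref{ff'}, Corollary~\ref{cor1}, and Lemma~\ref{Res-Disc} with $u=v=F$),
\[
\Disc(F\circ F)=\Res\!\bigl(F\circ F,\,F'(F(x))\bigr)\cdot\Res\!\bigl(F\circ F,\,F'(x)\bigr)=\Disc(F)^{4\deg g}\cdot\Res\!\bigl(F\circ F,\,F'(x)\bigr).
\]
Since $f$ is squarefree, $\Disc(F)$ is odd by Theorem~\ref{thm-Stickelberger}, so $\Disc(F)^{4\deg g}\equiv 1\pmod 8$ and it suffices to show $\Res(F\circ F,F'(x))\equiv 1\pmod 8$. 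Writing $F'(x)=4x^{3}G'(x^{4})+sx^{s-1}$ and expanding $\Res(F\circ F,F')=\prod_{\rho}F'(\rho)$ over the roots $\rho$ of $F\circ F$: the term $\prod_{\rho}s\rho^{s-1}=s^{16(\deg g)^{2}}(F\circ F)(0)^{\,s-1}$ is $\equiv 1\pmod 8$ because $s$ and $(F\circ F)(0)$ are odd and the exponents even; any term involving two or more factors $4\rho^{3}G'(\rho^{4})$ is divisible by $16$; and the single cross‑term is $\equiv 4T\pmod 8$, where $T=\sum_{\rho}\rho^{3}G'(\rho^{4})\prod_{\rho'\neq\rho}\rho'^{\,s-1}\in\ZZ$. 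Reducing $T$ mod $2$, using $\prod_{\bar\rho}\bar\rho=(f\circ f)(0)=1$ and $\overline{G'}=g'$, one gets
\[
\bar T=\sum_{\bar\rho}\bar\rho^{\,4-s}g'(\bar\rho^{\,4})=\sum_{k\ \mathrm{odd}}g_{k}\,p_{4k-s},
\]
where $g=\sum_{k}g_{k}x^{k}$ and $p_{m}$ is the $m$-th power sum of the roots of $f\circ f$ over $\overline{\ftwo}$.

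It remains to show $\bar T=0$, and here $s<4\deg g$ enters. Over $\ftwo$,
\[
\frac{(f\circ f)'(x)}{(f\circ f)(x)}=\frac{x^{s-1}f(x)^{s-1}}{(f\circ f)(x)}.
\]
The numerator has degree $(s-1)(4\deg g+1)\le 16(\deg g)^{2}-4\deg g-2<\deg(f\circ f)$, so comparing Laurent expansions at $x=\infty$ with $\sum_{m\ge 0}p_{m}x^{-m-1}$ yields $p_{m}=0$ for $0\le m\le 4\deg g$; and the numerator vanishes to order exactly $s-1$ at $x=0$ while $(f\circ f)(0)\neq 0$, so comparing power series at $x=0$ with $-\sum_{m\ge 1}p_{-m}x^{m-1}$ yields $p_{-\ell}=0$ for $1\le \ell\le s-1$. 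For odd $k$ with $1\le k\le\deg g$ the integer $4k-s$ is odd, hence nonzero; if $4k>s$ then $0<4k-s\le 4\deg g-1$, and if $4k<s$ then $1\le s-4k\le s-1$; in either case $p_{4k-s}=0$. Hence $\bar T=0$, $T$ is even, $\Res(F\circ F,F'(x))\equiv 1\pmod 8$ and $\Disc(F\circ F)\equiv 1\pmod 8$. By Theorem~\ref{thm-Stickelberger}, the squarefree polynomial $f^{(2)}$ of even degree has an even number of irreducible factors, so it is reducible and $f$ is not stable. I expect the whole weight of the proof to sit in the vanishing of $\bar T$: everything rests on the identity $(f\circ f)'=x^{s-1}f^{s-1}$ over $\ftwo$, and $s<4\deg g$ is precisely the inequality that forces every relevant power sum $p_{4k-s}$ into the ``gap'' of $(f\circ f)'/(f\circ f)$ at $\infty$ or at $0$.
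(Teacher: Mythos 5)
Your proof is correct, and it follows the strategy the paper announces for this theorem (the paper itself omits the proof, saying only that it is ``very similar'' to the trinomial case): reduce to $s$ odd and $f$ irreducible, lift to $\ZZ$, split $\Disc(F\circ F)$ as $\Res(F\circ F,F'(F(x)))\cdot\Res(F\circ F,F'(x))$ via Corollaries~\ref{cor1} and~\ref{ff'}, dispose of the first factor with Lemma~\ref{Res-Disc}, and show the second is $1\bmod 8$ so that Theorem~\ref{thm-Stickelberger} forces an even number of irreducible factors. Where you genuinely depart from the paper's template (Lemma~\ref{comp-trinomial}) is in the treatment of $\Res(F\circ F,F'(x))$: there the integral derivative factors as $x^{2n-s-1}(2nx^{s}+2n-s)$ and the resultant is expanded with Newton's identities supplying the needed power sums explicitly; here $F'=sx^{s-1}+4x^{3}G'(x^{4})$ does not factor that way, and you instead exploit that the correction term carries a factor $4$, so only its first-order contribution $4T$ survives mod $8$, and you kill $\bar T$ by reading off the vanishing of the power sums $p_{4k-s}$ (positive and negative) from the Laurent expansions of $(f\circ f)'/(f\circ f)=x^{s-1}f^{s-1}/(f\circ f)$ at $\infty$ and at $0$. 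This is exactly where $s<4\deg g$ enters, and it is a clean, arguably more conceptual substitute for the paper's explicit symmetric-function computations; the two are equivalent in content, since Newton's identities are the coefficientwise form of the logarithmic-derivative identity. Two minor points of hygiene: the terms with $k\ge 2$ factors of $4\rho^{3}G'(\rho^{4})$ should be grouped into symmetric sums before asserting divisibility by $16$ (individually they are only algebraic numbers), and $T$ should likewise be identified as a rational integer before reducing mod $2$ --- both are routine and handled the same way as the quantity $S$ in the proof of Lemma~\ref{comp-trinomial}.
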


\section{Concluding remarks}
As we noted in the introduction in~\cite{AOLS} the stability of quadratic polynomials over binary fields were studied. In~\cite{DNOS} the stability of the polynomials of degree three over fields of characteristic three has been studied. It seems that methods of~\cite{AOLS, DNOS} are very hard to apply for polynomials of higher degree or higher weight. In this paper, we used Theorems~\ref{thm-Stickel} and ~\ref{thm-Stickelberger} to study the stability of trinomials and some families of polynomials of higher weight over finite fields. This method also seems to be not that much of help for attacking our conjecture for polynomials of higher weight. For example, if  $f(x)=x^{20}+x^{18}+x^5+x^2+1$, then
computations with MAGMA computer algebra package shows that if $F(x)$ is a monic lift of $f(x)$ over the integers, then $\Disc(F(x))=\Disc(FF(x))=\Disc(FFF(x))\equiv 5 \pmod 8$ while $f(x)$ and $ff(x)$ are irreducible over $\ftwo$ and $fff(x)$ is not  irreducible over $\ftwo$. Thus one cannot hope to use Theorems~\ref{thm-Stickel} and~\ref{thm-Stickelberger} solely to resolve our conjecture. 

 We also noted in the introduction that Odoni~\cite{Odoni} studied the stability of additive polynomial $f(x)=x^p-x-1$ over $\fp$. His method is different from the methods of the current paper and those of~\cite{AOLS, DNOS}. Using Capelli's lemma he showed that the Galois group of $ff(x)$ over $\fp$ is the cyclic group of order $p$ and hence $ff(x)$ is not stable. It is not clear how to generalize Odoni's method to the case of polynomials which are not additive over $\fp$. In conclusion, it seems that new ideas and methods are needed to be able to confirm Conjecture~\ref{main-conj} if it is correct.

\begin{bibdiv}
\begin{biblist}	
	
\bib{AOLS}{article}{
	author={Ahmadi, Omran},
	author={Luca, Florian},
	author={Ostafe, Alina},
	author={Shparlinski, Igor E.},
	title={On stable quadratic polynomials},
	journal={Glasg. Math. J.},
	volume={54},
	date={2012},
	number={2},
	pages={359--369},
	issn={0017-0895},
	review={\MR{2911375}},
	doi={10.1017/S001708951200002X},
}	

\bib{Nidal}{article}{
	author={Ali, Nidal},
	title={Stabilit\'e des polyn\^omes},
	language={French},
	journal={Acta Arith.},
	volume={119},
	date={2005},
	number={1},
	pages={53--63},
	issn={0065-1036},
	review={\MR{2163517}},
	doi={10.4064/aa119-1-4},
}

\bib{Ayad}{article}{
	author={Ayad, Mohamed},
	author={McQuillan, Donald L.},
	title={Corrections to: ``Irreducibility of the iterates of a quadratic
		polynomial over a field'' [Acta Arith. {\bf 93} (2000), no. 1, 87--97;
		MR1760091 (2001c:11031)]},
	journal={Acta Arith.},
	volume={99},
	date={2001},
	number={1},
	pages={97},
	issn={0065-1036},
	review={\MR{1845367}},
	doi={10.4064/aa99-1-9},
}
\bib{Cohen-1}{article}{
	author={Cohen, Stephen D.},
	title={On irreducible polynomials of certain types in finite fields},
	journal={Proc. Cambridge Philos. Soc.},
	volume={66},
	date={1969},
	pages={335--344},
	review={\MR{0244202}},
}

\bib{Dalen}{article}{
	author={Dalen, K\aa re},
	title={On a theorem of Stickelberger},
	journal={Math. Scand.},
	volume={3},
	date={1955},
	pages={124--126},
	issn={0025-5521},
	review={\MR{0071460}},
	doi={10.7146/math.scand.a-10433},
}

\bib{Domingo}{article}{
	author={G\'omez-P\'erez, Domingo},
	title={Personal communication},

}

\bib{DNOS}{article}{
	author={G\'omez-P\'erez, Domingo},
	author={Nicol\'as, Alejandro P.},
	author={Ostafe, Alina},
	author={Sadornil, Daniel},
	title={Stable polynomials over finite fields},
	journal={Rev. Mat. Iberoam.},
	volume={30},
	date={2014},
	number={2},
	pages={523--535},
	issn={0213-2230},
	review={\MR{3231208}},
	doi={10.4171/RMI/791},
}

\bib{Goksel}{article}{
	author={Goksel, Vefa},
	author={Xia, Shixiang},
	author={Boston, Nigel},
	title={A refined conjecture for factorizations of iterates of quadratic
		polynomials over finite fields},
	journal={Exp. Math.},
	volume={24},
	date={2015},
	number={3},
	pages={304--311},
	issn={1058-6458},
	review={\MR{3359218}},
	doi={10.1080/10586458.2014.992079},
}

\bib{Jones-Boston}{article}{
	author={Jones, Rafe},
	author={Boston, Nigel},
	title={Settled polynomials over finite fields},
	journal={Proc. Amer. Math. Soc.},
	volume={140},
	date={2012},
	number={6},
	pages={1849--1863},
	issn={0002-9939},
	review={\MR{2888174}},
	doi={10.1090/S0002-9939-2011-11054-2},
}

\bib{LN}{book}{
	author={Lidl, Rudolf},
	author={Niederreiter},
	title={Finite Fields},
    issn={Cambridge University Press},
    date={1984},
}

\bib{Odoni}{article}{
	author={Odoni, R. W. K.},
	title={The Galois theory of iterates and composites of polynomials},
	journal={Proc. London Math. Soc. (3)},
	volume={51},
	date={1985},
	number={3},
	pages={385--414},
	issn={0024-6115},
	review={\MR{805714}},
	doi={10.1112/plms/s3-51.3.385},
}

\bib{Pellet}{article}{
	
	author={Pellet, A.-E.},
	title={Sur la décomposition d'une fonction entière en facteurs irréductibles suivant un module premier p.},
	journal={Comptes Rendus de l'Académie des Sciences Paris},
	volume={86},
	date={1878},
	pages={1071--1072},

}

\bib{Stickelberger}{article}{
	author={Stickelberger, Ludwig},
    title={ Über eine neue Eigenschaft der Diskriminanten algebraischer Zahlkörper.},
    journal={Verhandlungen des ersten Internationalen Mathematiker-Kongresses, Zürich},
    volume={},
    date={1897},
   pages={182--193},

}

\bib{Swan}{article}{
	author={Swan, Richard G.},
	title={Factorization of polynomials over finite fields},
	journal={Pacific J. Math.},
	volume={12},
	date={1962},
	pages={1099--1106},
	issn={0030-8730},
	review={\MR{0144891}},
}

\end{biblist}

\end{bibdiv}
\end{document}